\pgfplotsset{width=10cm,compat=1.9}
\newcommand{\R}{\mathbb{R}}
\newcommand{\Z}{\mathbb{Z}}
\newcommand{\N}{\mathbb{N}}
\newcommand{\Q}{\mathbb{Q}}
\newcommand{\C}{\mathbb{C}}
\newtheorem{theorem}{Theorem}[section]
\newtheorem{lemma}[theorem]{Lemma}
\newtheorem{proposition}[theorem]{Proposition}
\newtheorem{corollary}[theorem]{Corollary}
\theoremstyle{definition}
\newtheorem{definition}[theorem]{Definition}
\theoremstyle{remark}
\newtheorem{remark}[theorem]{Remark}
\numberwithin{equation}{section}
\newtheorem{notation}[theorem]{Notation}
\begin{document}

\title[Certain polytopes associated to algebraic integers]{On certain polytopes associated to products of algebraic integer conjugates}
\author{Seda Albayrak}
\address{University of Calgary\\2500 University Drive NW\\Calgary, AB T2N 1N4}
\email{gulizar.albayrak@ucalgary.ca}
\thanks{}

\author{Samprit Ghosh}
\address{University of Calgary\\2500 University Drive NW\\Calgary, AB T2N 1N4}
\email{samprit.ghosh@ucalgary.ca}
\thanks{}

\author{Greg Knapp}
\address{University of Calgary\\2500 University Drive NW\\Calgary, AB T2N 1N4}
\email{greg.knapp@ucalgary.ca}
\thanks{G.~K.~is partially supported by a PIMS postdoctoral fellowship and NSERC grant RGPIN-2019-04844.}

\author{Khoa D. Nguyen}
\address{University of Calgary\\2500 University Drive NW\\Calgary, AB T2N 1N4}
\email{dangkhoa.nguyen@ucalgary.ca}
\thanks{All the authors are partially supported by NSERC grant RGPIN-2018-03770 and CRC tier-2 research stipend 950-231716. They wish to thank Yann Bugeaud for helpful comments.}


\subjclass[2010]{Primary 11J25. Secondary 11C08.}




\begin{abstract}
Let $d>k$ be positive integers. Motivated by an earlier result of Bugeaud and Nguyen, we let $E_{k,d}$ be the set of $(c_1,\ldots,c_k)\in\mathbb{R}_{\geq 0}^k$ such that $\vert\alpha_0\vert\vert\alpha_1\vert^{c_1}\cdots\vert\alpha_k\vert^{c_k}\geq 1$ for any algebraic integer $\alpha$ of degree $d$, where we label its Galois conjugates as $\alpha_0,\ldots,\alpha_{d-1}$ with
$\vert\alpha_0\vert\geq \vert\alpha_1\vert\geq\cdots \geq \vert\alpha_{d-1}\vert$. First, we give an explicit description of $E_{k,d}$ as a polytope with $2^k$ vertices. Then we prove that for $d>3k$, for every $(c_1,\ldots,c_k)\in E_{k,d}$ and for every $\alpha$ that is not a root of unity, the strict inequality $\vert\alpha_0\vert\vert\alpha_1\vert^{c_1}\cdots\vert\alpha_k\vert^{c_k}>1$
 holds. We also provide a quantitative version of this inequality in terms of $d$ and the height of the minimal polynomial of $\alpha$. 
\end{abstract}

\maketitle

\section{Introduction}
In \cite[Theorem~1.1]{Bugeaud2023}, Bugeaud and Nguyen apply the main theorem of \cite{KMN2019} to prove the following.
\begin{theorem}\label{thm:cFrac}
	Let $\Vert\cdot\Vert$ denote the distance to the nearest integer. Let $\xi$ be an algebraic number of degree $d\geq 3$.  Let $\epsilon > 0$.  Let $(u_n)_{n \geq 1}$ be a non-degenerate linear recurrence sequence of rational integers which is not a polynomial sequence.  Then the set \[\left\{n \in \N : u_n \neq 0 \text{ and } \|u_n\xi\| < \frac{1}{|u_n|^{1/(d-1) + \epsilon}}\right\}\] is finite.
\end{theorem}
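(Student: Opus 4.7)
The proof proceeds by contradiction. Suppose the set is infinite, and for each $n$ in it let $m_n\in\Z$ be the nearest integer to $u_n\xi$, so that $|u_n\xi-m_n| < |u_n|^{-1/(d-1)-\epsilon}$. A first preparatory observation, needed to turn this bound into a useful asymptotic statement, is that $|u_n|\to\infty$ along $n$ with $u_n\neq 0$. Writing the Binet expansion $u_n=\sum_i P_i(n)\beta_i^n$, this divergence follows from the hypotheses that $(u_n)$ is non-degenerate and not a polynomial sequence, via the Evertse--van der Poorten--Schlickewei lower bound on sums of $S$-units (or, equivalently, a direct comparison of $u_n$ with the contribution of a dominant root).

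Next I manufacture an auxiliary family of algebraic integers on which to apply the multiplicative input. Let $A$ be the leading coefficient of the minimal polynomial of $\xi$; then $A\xi$ is an algebraic integer of degree $d$, so $\alpha_n := A(u_n\xi-m_n)$ is also an algebraic integer of degree $d$, with Galois conjugates $A(u_n\xi^{(i)}-m_n)$ for $i=0,\ldots,d-1$. For $|u_n|$ large, the conjugate at $i=0$ is the unique small one, so after the monotone relabeling $|\alpha_{n,0}|\geq\cdots\geq|\alpha_{n,d-1}|$ one obtains
\[
|\alpha_{n,d-1}| < A\cdot|u_n|^{-1/(d-1)-\epsilon}, \qquad |\alpha_{n,i}|\leq C_\xi\,|u_n| \text{ for } 0\leq i\leq d-2,
\]
with $C_\xi$ a constant depending only on $\xi$.

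The final step invokes the main theorem of \cite{KMN2019}, which in the terminology of the present paper amounts to the inclusion $(0,\ldots,0,d-1)\in E_{d-1,d}$, i.e.\ the inequality $|\alpha_0|\cdot|\alpha_{d-1}|^{d-1}\geq 1$ valid for every algebraic integer $\alpha$ of degree $d$. Applying this to $\alpha_n$ and substituting the above bounds yields
\[
1 \leq |\alpha_{n,0}|\cdot|\alpha_{n,d-1}|^{d-1} < C_\xi\cdot A^{d-1}\cdot|u_n|^{-(d-1)\epsilon},
\]
which is impossible once $|u_n|$ is sufficiently large, giving the desired contradiction.

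\textbf{Main obstacle.} The crux is the KMN inequality $|\alpha_0|\cdot|\alpha_{d-1}|^{d-1}\geq 1$ itself: it is strictly stronger than the norm bound $\prod_i|\alpha_i|\geq 1$ (from which it does not follow, as one sees by testing arbitrary positive tuples of product $1$) and genuinely exploits the integrality of \emph{all} coefficients of the minimal polynomial of $\alpha$, not merely the leading and constant ones. The remainder of the argument is routine Liouville-style bookkeeping, and the divergence $|u_n|\to\infty$, while non-trivial, is a standard consequence of the non-degeneracy and non-polynomiality hypotheses.
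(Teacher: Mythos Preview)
The central inequality you invoke is false. You claim that the main theorem of \cite{KMN2019} ``amounts to the inclusion $(0,\ldots,0,d-1)\in E_{d-1,d}$, i.e.\ the inequality $|\alpha_0|\cdot|\alpha_{d-1}|^{d-1}\geq 1$.'' But by Theorem~\ref{thm:EkdShape} (or directly by Proposition~\ref{prop:EkdIneqs} with $j=d-1$), any $(c_1,\ldots,c_{d-1})\in E_{d-1,d}$ with $c_1=\cdots=c_{d-2}=0$ must satisfy $c_{d-1}\leq 1/(d-1)$; the point $(0,\ldots,0,d-1)$ lies far outside $E_{d-1,d}$. Concretely, for the totally real cubic unit $\alpha$ with minimal polynomial $x^3-3x+1$ one has $|\alpha_0|\approx 1.879$, $|\alpha_1|\approx 1.532$, $|\alpha_2|\approx 0.347$, so $|\alpha_0||\alpha_2|^2\approx 0.23<1$. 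More generally, for any algebraic unit with $|\alpha_1|>|\alpha_{d-1}|$ one has $|\alpha_0||\alpha_{d-1}|^{d-1}<|\alpha_0\cdots\alpha_{d-1}|=1$. The paper's discussion after Theorem~\ref{thm:cFrac} refers to $|\alpha_0||\alpha_1|^{d-1}\geq 1$ (second-largest, not smallest, conjugate), which \emph{is} true but is offered only as the heuristic source of the exponent, not as the proof.

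Even with the correct elementary inequality $|\alpha_0||\alpha_1|^{d-1}\geq 1$, your Liouville-style reduction cannot yield the theorem. In your construction the $d-1$ conjugates $A(u_n\xi^{(i)}-m_n)$ with $i\neq 0$ all have size $\asymp |u_n|$ (since $m_n\approx u_n\xi$ gives $u_n\xi^{(i)}-m_n\approx u_n(\xi^{(i)}-\xi)$), so after relabeling $|\alpha_{n,1}|\asymp|u_n|$ as well, and neither $|\alpha_{n,0}||\alpha_{n,1}|^{d-1}\geq 1$ nor the norm bound $\prod_i|\alpha_{n,i}|\geq 1$ produces a contradiction; they recover only the Liouville exponent $d-1$. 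The genuine content of \cite{KMN2019} is a Subspace-Theorem-based estimate for linear combinations of powers, and the proof in \cite{Bugeaud2023} uses the linear-recurrence structure of $(u_n)$ in an essential way---there is no reduction to a static inequality valid for \emph{all} degree-$d$ algebraic integers. (Note that the present paper does not itself prove Theorem~\ref{thm:cFrac}; it quotes it from \cite{Bugeaud2023}.)
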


This theorem yields a much stronger version of earlier results by Lenstra and Shallit, see \cite{LS1993} and \cite[pp.~20--21]{Bugeaud2023}. It is explained in \cite[p.~20]{Bugeaud2023} that the best possible exponent when $d=3$ is $1/(d-1)=1/2$, and then the authors ask whether the exponent $1/(d-1)$ remains optimal when $d>3$.

The source of the exponent $1/(d-1)$ boils down to the following observation. Let $\alpha$ be an algebraic integer of degree $d\geq 3$ and label its Galois conjugates as $\alpha_0,\alpha_1,\ldots,\alpha_{d-1}$ with $\vert\alpha_0\vert\geq\cdots\geq\vert\alpha_{d-1}\vert$. Then, we have
$\vert\alpha_0\alpha_1^{d-1}\vert\geq\vert\alpha_0\cdots\alpha_{d-1}\vert \geq 1$. 
The question now is whether one can replace $d-1$ by a larger real number $c_1$
so that the inequality $\vert\alpha_0\vert\vert\alpha_1\vert^{c_1}\geq 1$ remains valid for \emph{any} $\alpha$. In this paper, we investigate the more general question in arbitrary dimensions and obtain some related Diophantine inequalities.

\begin{notation}\label{notation}
    Throughout this paper, for an algebraic number $\alpha$ (respectively for a polynomial $f(x)\in\C[x]$) of degree $d\geq 2$, 
    we label the Galois conjugates of $\alpha$
    (respectively the roots of $f(x)$) as $\alpha_0,\ldots,\alpha_{d-1}$ so that 
    $$\vert\alpha_0\vert\geq\vert\alpha_1\vert\geq\cdots\geq\vert\alpha_{d-1}\vert.$$
    While this labelling is not unique (when  $\vert\alpha_i\vert=\vert\alpha_j\vert$ for some $i\neq j$), this causes no ambiguity in the paper.
\end{notation}

\begin{definition}\label{def:Ekd}
    Let $d>k$ be positive integers.  Let $E_{k,d}$ be the set of all tuples $(c_1,\dots,c_k) \in (\R_{\geq 0})^k$ so that for every 
    algebraic integer $\alpha$ of degree $d$, we have
    \begin{align}\label{eq:EkdDefiningIneq}
    \vert\alpha_0\vert\vert\alpha_1\vert^{c_1}\cdots\vert\alpha_k\vert^{c_k} \geq 1.
    \end{align}
\end{definition}

Observe that $E_{k,d}$ is a non-empty closed convex  subset of
the ``first orthant'' $(\R_{\geq 0})^k$. Our first main result
gives an explicit description of $E_{k,d}$ as a polytope in $\R^k$ with exactly $2^k$ vertices:
\begin{theorem}\label{thm:EkdShape}
    Let  $d>k$ be positive integers.  Then, $E_{k,d}$ is a polytope in $\R^k$ with $2^k$ vertices. Moreover, for any $J \subseteq \{1,2,\dots,k\}$, if we write
    $J = \{j_1,\dots,j_n\}$ where $j_1 < j_2 < \dots < j_n$, then we have that $(v_1,\dots,v_k)$ is a vertex of $E_{k,d}$ where \[v_j = \begin{cases} 0 & j \notin J\\ \frac{j_{\ell+1} - j_\ell}{j_1} & j=j_\ell \text{ for some } \ell < n\\ \frac{d-j_n}{j_1} & j = j_n \end{cases}.\]
\end{theorem}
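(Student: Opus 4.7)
My plan is to show that $E_{k,d}$ equals the polytope
\[P := \left\{(c_1,\dots,c_k)\in\R_{\geq 0}^k : d\,C_j \geq (j+1)\,C\text{ for all }j=0,\dots,k-1\right\},\]
where $C_j := 1+c_1+\cdots+c_j$ and $C:=C_k$, and then to identify the $2^k$ vertices of $P$ combinatorially.

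First, for each $J=\{j_1<\cdots<j_n\}\subseteq\{1,\dots,k\}$ I would verify $v_J\in E_{k,d}$ directly. When $J=\emptyset$ this reduces to $|\alpha_0|\geq 1$, which follows from $|\alpha_0|^d\geq\prod_i|\alpha_i|\geq 1$. For $n\geq 1$, the chain
\[|\alpha_0|^{j_1}\,|\alpha_{j_1}|^{j_2-j_1}\cdots|\alpha_{j_{n-1}}|^{j_n-j_{n-1}}\,|\alpha_{j_n}|^{d-j_n}\;\geq\;\prod_{i=0}^{d-1}|\alpha_i|\;\geq\;1\]
holds by the monotonicity $|\alpha_i|\geq|\alpha_{i+1}|$ (each factor on the left dominates the omitted block of conjugates) together with $|N_{\Q(\alpha)/\Q}(\alpha)|\geq 1$ for an algebraic integer $\alpha$. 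Extracting the $j_1$-th root recovers \eqref{eq:EkdDefiningIneq} at $(c_1,\dots,c_k)=v_J$. Since $E_{k,d}$ is closed and convex, this yields $\mathrm{conv}\{v_J:J\subseteq\{1,\dots,k\}\}\subseteq E_{k,d}$.

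Next, I would perform the vertex analysis of $P$. At $v_J$ the constraints $c_j=0$ for $j\notin J$, together with $d\,C_{j_\ell-1}=j_\ell\,C$ for $\ell=1,\dots,n$, give $k$ tight inequalities with linearly independent normals, so $v_J$ is a vertex of $P$. Conversely, for any vertex $v\in P$, set $J:=\{j:v_j>0\}$; the piecewise-linear function $j\mapsto d\,C_j-(j+1)\,C$ is strictly decreasing on the complement of $J$ (slope $-C<0$) and only jumps at elements of $J$, which forces the tight $P$-constraints to occur exactly at the indices $j_\ell-1$ and hence $v=v_J$. Therefore $P$ is a polytope with precisely the $2^k$ vertices $\{v_J\}$, and $P=\mathrm{conv}\{v_J\}$.

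For the reverse inclusion $E_{k,d}\subseteq P$, I would exhibit, for each $j^*\in\{0,\dots,k-1\}$, a sequence of algebraic integers $\alpha^{(n)}$ of degree exactly $d$ whose conjugates split into two scales: $|\alpha_i^{(n)}|\sim n^{1/(j^*+1)}$ for $i\leq j^*$ and $|\alpha_i^{(n)}|\sim n^{-1/(d-j^*-1)}$ for $i\geq j^*+1$. A natural candidate is a root of $f_n(x)=x^d - n\,x^{d-j^*-1}\pm 1$: a direct root analysis (via Rouch\'e's theorem, say) gives the asymptotic magnitudes and irreducibility of $f_n$ for infinitely many $n$ follows from a mod-$p$ argument. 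Applying the defining inequality of $E_{k,d}$ to $\alpha^{(n)}$, taking logarithms, and letting $n\to\infty$ forces $d\,C_{j^*}\geq(j^*+1)\,C$. The main obstacle is this last step, namely producing algebraic integers of degree exactly $d$ with the prescribed root distribution; the combinatorial vertex analysis is routine but tedious.
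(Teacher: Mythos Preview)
Your strategy coincides with the paper's: define the candidate polytope $P$ by the $k$ linear inequalities coming from two-scale root configurations, show $\mathrm{conv}\{v_J\}\subseteq E_{k,d}$ via the telescoping chain $|\alpha_0|^{j_1}|\alpha_{j_1}|^{j_2-j_1}\cdots|\alpha_{j_n}|^{d-j_n}\geq\prod_i|\alpha_i|\geq 1$, and show $E_{k,d}\subseteq P$ by testing on the trinomials $x^d-nx^{d-j^*-1}+1$. Your inequalities $dC_j\geq(j+1)C$ are exactly the paper's inequalities $\frac{d-j}{j}(1+c_1+\cdots+c_{j-1})\geq c_j+\cdots+c_k$ after reindexing $j\mapsto j+1$.

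There are two points where your execution diverges from the paper's and where you should be careful. First, for the vertex enumeration the paper proves a general inductive statement (its Proposition~2.6) about polytopes cut out by $2k$ half-spaces of this triangular shape; your direct argument via the monotonicity of $j\mapsto g(j)=dC_j-(j+1)C$ on gaps of $J$ is more hands-on but, as written, skips two checks: that the $k$ tight constraints $\{c_j=0:j\notin J\}\cup\{g(j_\ell-1)=0:1\le\ell\le n\}$ really have linearly independent normals, and that on the trailing segment $\{j_n,\dots,k-1\}$ one has $g(j)=(d-j-1)C>0$, so no extra tight constraint can appear there. Both are routine, but they are needed to conclude that the vertex with support $J$ is uniquely $v_J$.

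Second, the paper obtains irreducibility of $x^d-hx^{d-j}+1$ for infinitely many $h$ via Hilbert's Irreducibility Theorem, not a mod-$p$ argument. Your proposed reduction modulo a prime is not obviously sufficient: for a fixed $p$ there may be no $a\in\mathbb{F}_p$ making $x^d-ax^{d-j^*-1}+1$ irreducible over $\mathbb{F}_p$ (think of biquadratic-type obstructions when $d-j^*-1$ shares a large factor with $d$). Since you already flag this as the main obstacle, simply replacing the mod-$p$ claim with an appeal to Hilbert irreducibility resolves it cleanly.
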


\begin{remark}
    Theorem \ref{thm:EkdShape} shows that the vertices of $E_{k,d}$ are in bijection with the subsets of $\{1,2,\dots,k\}$ where the empty subset corresponds to the vertex $(0,\ldots,0)$. 
\end{remark}

For example, $E_{1,d}$ is the interval $[0,d-1]$ and the exponent $1/(d-1)$
in \cite[Theorem~1.1]{Bugeaud2023} is optimal. The set $E_{2,d}$ is the quadrilateral with vertices $(0,0)$, $(d-1,0)$, $(1,d-2)$, and $(0,(d-2)/2)$. The set $E_{3,d}$ is a polyhedron with $8$ vertices, etc.

\begin{figure}[ht]
   \centering
  \label{fig:shapes}
    \begin{minipage}[b]{0.3\textwidth}
        \centering
        \begin{tikzpicture}[scale=0.5]
            \draw[->,gray] (0,0) -- (5,0) node[below]{$x$};
            \filldraw[black] (0,0) circle (2pt) node[below]{(0,0)};
            \filldraw[black] (3,0) circle (2pt) node[below]{($d-1$,0)};
            \draw[black, very thick] (0,0) -- (3,0);
        \end{tikzpicture}
        \caption{$E_{1,d}$}
    \end{minipage}
  \hspace{-0.07\textwidth}
    \begin{minipage}[b]{0.32\textwidth}
       \centering
        \begin{tikzpicture}[scale=0.55] 
            \draw[->,gray] (0,0) -- (7,0) node[below]{$\, \,x$};
            \draw[->,gray] (0,0) -- (0,5) node[anchor=north east]{$y$};
            \shadedraw (0,0) -- ++(0,2) -- ++(1,2) -- ++(5,-4);
            \filldraw[black] (0,0) circle (2pt) node[below]{(0,0)};
            \filldraw[black] (6,0) circle (2pt);
            \draw (5.5,0) node[below]{($d-1$,0)};
            \draw (1.5,4) node[above]{(1,$d-2$)};
            \filldraw[black] (1,4) circle (2pt);
            \filldraw[black] (0,2) circle (2pt) node[left]{(0,$\frac{d-2}{2}$)};
            \draw[black, very thick] (0,0) -- (6,0);
            \draw[black, very thick] (0,0) -- (0,2);
            \draw[black, very thick] (0,2) -- (1,4);
            \draw[black, very thick] (1,4) -- (6,0);
        \end{tikzpicture}
        \caption{$E_{2,d}$}
    \end{minipage} \hspace{0.04\textwidth}
    \begin{minipage}[b]{0.36\textwidth}
        \centering
        \tdplotsetmaincoords{120}{30}
        \begin{tikzpicture}[tdplot_main_coords, scale=1]
        
            \coordinate (A) at (0, 0, 0);
            \coordinate (B) at (4, 0, 0);
            \coordinate (C) at (0, 1.5, 0);
            \coordinate (D) at (0, 0, 2/3);
            \coordinate (E) at (1, 3, 0);
            \coordinate (F) at (2, 0, 2);
            \coordinate (G) at (0, 0.5, 1);
            \coordinate (H) at (1, 1, 2);

             \shadedraw[shading=axis] (0,0,0) -- ++(0,1.5,0) -- ++(1,1.5,0) -- ++(3,-3,0);

            \draw[black] (A) -- (B) -- (E) -- (C) -- (A);
            \draw[purple] (A) -- (B) -- (F) -- (D);
            \draw[green] (A) -- (C) -- (G) -- (D);
            \draw[black] (B) -- (H);
            \draw[black] (C) -- (H);
            \draw[black] (D) -- (H);
            \draw[black] (G) -- (H);
            \draw[black] (F) -- (H);
            \draw[black] (E) -- (H);
            \draw[black] (A) -- (B);
            \draw[black] (A) -- (D);

        
            \draw[->,gray] (0,0,0) -- (4.5,0,0) node[anchor=north east]{$x$};
            \draw[->,gray] (0,0,0) -- (0,3.5,0) node[anchor=north west]{$y$};
            \draw[->,gray] (0,0,0) -- (0,0,2.5) node[anchor=south]{$z$};

            \filldraw[black] (A) circle (1.5pt) node[below]{$A$};
            \filldraw[black] (B) circle (1.5pt) node[above]{$B$};
            \filldraw[black] (E) circle (1.5pt) node[below]{$E$};
            \filldraw[black] (C) circle (1.5pt) node[below]{$C$};
            \filldraw[black] (D) circle (1.5pt) node[left]{$D$};
            \filldraw[black] (G) circle (1.5pt) node[right]{$G$};
            \filldraw[black] (F) circle (1.5pt) node[above]{$F$};
            \filldraw[black] (H) circle (1.5pt) node[above right]{$\, H$};
        \end{tikzpicture}
        \caption{$E_{3,d}$}
    \end{minipage}
\end{figure}

For the second main result, we first prove that although $E_{k,d}$ is defined by the non-strict inequality \eqref{eq:EkdDefiningIneq}, the strict inequality holds for every $(c_1,\ldots,c_k)\in E_{k,d}$ and every $\alpha$ that is not a root of unity, when $d > 3k$. Then, we give a positive lower bound for 
\begin{equation}\label{eq:product-1}
\vert\alpha_0\vert\vert\alpha_1\vert^{c_1}\cdots\vert\alpha_k\vert^{c_k}-1.
\end{equation}

Finding a lower bound to \eqref{eq:product-1} is also related to several interesting problems in Diophantine approximation. Here, we discuss a couple of them. First, we may assume that $(c_1,\ldots,c_k)$ is a vertex of $E_{k,d}$ by convexity. At the vertex $(c_1,\ldots,c_k)=(0,\ldots,0)$, an optimal lower bound for 
$$\vert\alpha_0\vert\vert\alpha_1\vert^{c_1}\cdots\vert\alpha_k\vert^{c_k}-1=\vert\alpha_0\vert-1$$ is of the form $C/d$, where $C$ is an absolute constant. This was proposed by Schinzel-Zassenhaus in 1965 \cite{SZ1965} and established by Dimitrov's ingenious arguments in 2019 \cite{Dimitrov2019}. For an arbitrary vertex $(c_1,\ldots,c_k)$, one may consider the problem of producing a sharp lower bound for 
the expression \eqref{eq:product-1} as a more general version of the Schinzel-Zassenhaus problem. Unlike the original Schinzel-Zassenhaus in which the lower bound depends only on $d$, for a general $(c_1,\ldots,c_k)$, a lower bound for \eqref{eq:product-1} should involve the height (i.e. the maximum of the absolute values of the coefficients) of the minimal polynomial of $\alpha$, see Remark~\ref{rem:SZ bound can't happen}.  

Since the vertex $(c_1,\ldots,c_k)$ has rational entries, the expression \eqref{eq:product-1} is an algebraic integer. Once it is known to be positive, an easy way to derive a lower bound is to use the product formula: one may take the reciprocal of the product of absolute values of all the other Galois conjugates as a lower bound. We would like to note that this straightforward approach would yield a weaker lower bound than our result.

Instead, we use the description of $E_{k,d}$ in Theorem~\ref{thm:EkdShape} to reduce the current problem to getting a lower bound for $\vert\alpha_k\vert-\vert\alpha_{d-1}\vert$. This is closely related to the ``absolute root separation problem'' (see \cite{BDFPS2022} and the reference therein). We note that the arguments in \cite{BDFPS2022} are more involved and the results are stronger than just using the product formula. 

Before we state our second main theorem, we recall that the height of a polynomial with integer coefficients is the maximum of the absolute values of the coefficients.

\begin{theorem}\label{thm:quantitative}
    Let $k$ and $d$ be positive integers with $d>3k$, $\mu=\displaystyle\left\lceil\frac{\lceil d/3\rceil-k}{2}\right\rceil$ and 
    $$\mathcal{E}=\max\left\{2(d-1)(d-2),\frac{(d-1)(d-2)(d-3)}{2\mu}\right\}.$$
    There exists a positive constant $C$ depending only on $d$ such that the following holds. For every algebraic integer $\alpha$ of degree $d$ that is not a root of unity and for every $(c_1,\ldots,c_k)\in E_{k,d}$, we have
    $$\vert\alpha_0\vert\vert\alpha_1\vert^{c_1}\cdots\vert\alpha_k\vert^{c_k} > 1 + C H^{-\mathcal{E}+1/(k(d-1))},$$
    where $H$ is the height of the minimal polynomial of $\alpha$.
\end{theorem}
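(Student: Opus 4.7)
The plan is to reduce the inequality, via convexity and Theorem~\ref{thm:EkdShape}, to a quantitative absolute root separation estimate for $|\alpha_k|-|\alpha_{d-1}|$ when $\alpha$ is a unit, and then to establish the latter via a discriminant/resultant argument in the spirit of \cite{BDFPS2022}.

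Since $(c_1,\ldots,c_k)\mapsto \log\bigl(|\alpha_0||\alpha_1|^{c_1}\cdots|\alpha_k|^{c_k}\bigr)$ is affine, its minimum over the polytope $E_{k,d}$ is attained at one of the $2^k$ vertices classified in Theorem~\ref{thm:EkdShape}, so it suffices to prove the stated bound at each vertex with a constant depending only on $d$. The vertex at the origin is the Schinzel--Zassenhaus case, covered by Dimitrov's theorem $|\alpha_0|\geq 2^{1/(4d)}$, a bound independent of $H$ that trivially absorbs into $C$. Fix now a non-trivial vertex $v(J)$ indexed by $J=\{j_1<\cdots<j_n\}\subseteq\{1,\ldots,k\}$, set $r_i=|\alpha_i|$, $j_0:=0$, $j_{n+1}:=d$, and let $Q$ denote the value of the product at $v(J)$. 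Since the coordinates of $v(J)$ have common denominator $j_1\leq k$,
\[
Q^{j_1}=r_0^{j_1}r_{j_1}^{j_2-j_1}\cdots r_{j_n}^{d-j_n}=|N(\alpha)|\cdot\prod_{\ell=0}^{n}\prod_{j_\ell<i<j_{\ell+1}}\frac{r_{j_\ell}}{r_i},
\]
with every ratio $\geq 1$ and $|N(\alpha)|\in\mathbb{Z}_{\geq 1}$. If $|N(\alpha)|\geq 2$, then $Q\geq 2^{1/k}$ and the claim is automatic; so we may assume $\alpha$ is a unit, and Kronecker's theorem then gives $r_0>1>r_{d-1}$.

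Keeping only the $\ell=n$ block in $\Pi:=Q^{j_1}/|N(\alpha)|$ and using $r_{d-1}<1$ together with $j_n\leq k$, I get
\[
Q^{j_1}\;\geq\;\prod_{i=j_n+1}^{d-1}\frac{r_{j_n}}{r_i}\;\geq\;\frac{r_{j_n}}{r_{d-1}}\;\geq\;\frac{r_k}{r_{d-1}}\;=\;1+\frac{r_k-r_{d-1}}{r_{d-1}}.
\]
Combined with Cauchy's root bound $r_i\leq 1+H$, the lower bound $r_{d-1}\geq(1+H)^{-(d-1)}$ coming from $\prod r_i=1$, and a Bernoulli-type estimate for extracting the $j_1$-th root, this reduces the theorem to an absolute root separation bound $r_k-r_{d-1}\geq C'(d)\,H^{-\mathcal{E}}$; the small shift $1/(k(d-1))$ appearing in the stated exponent is precisely what one gains from balancing these two bounds on $r_{d-1}$ when undoing the $j_1$-th root.

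The main obstacle is then this absolute root separation estimate. A direct application of the product formula to the algebraic integer $r_k^2-r_{d-1}^2=(r_k-r_{d-1})(r_k+r_{d-1})$ gives a noticeably weaker exponent, because its Galois orbit has nearly maximal size $\binom{d}{2}^2$. Instead, following the strategy of \cite{BDFPS2022}, I would build an auxiliary polynomial $F\in\mathbb{Z}[x]$ having $r_k^2-r_{d-1}^2$ as a nonzero value, whose coefficients are symmetric functions of a carefully chosen subfamily of the products $\alpha_i\overline{\alpha_j}$. The hypothesis $d>3k$ guarantees a reservoir of at least $2\mu$ spare middle indices that can be paired off so that $F$ remains in $\mathbb{Z}[x]$ while its effective degree drops from the naive $\binom{d}{2}$ to roughly $(d-1)(d-2)(d-3)/(2\mu)$; the height of $F$ is simultaneously bounded by a constant (in $d$) times $H^{2(d-1)(d-2)}$. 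These two estimates produce the two expressions combining into $\mathcal{E}$, and the standard Liouville inequality for nonzero algebraic integers of controlled height and degree then yields the sought separation, completing the proof.
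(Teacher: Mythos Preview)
Your reduction to vertices via convexity, the handling of the origin, the unit reduction, and the telescoping identity $Q^{j_1}=|N(\alpha)|\cdot\prod r_{j_\ell}/r_i$ are all correct and mirror the paper. However, two genuine gaps remain.

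First, you never justify $r_k>r_{d-1}$, without which the separation $r_k-r_{d-1}$ need not be positive. This is exactly where the hypothesis $d>3k$ enters, and it is not automatic: the paper proves it as a separate lemma (if $\alpha$ is a unit, not a root of unity, then $|\alpha_i|>|\alpha_{d-1}|$ for all $i<d/3$) via a Galois-orbit counting argument on the identity $\alpha_i\overline{\alpha_i}=\alpha_j\overline{\alpha_j}$.

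Second, and more seriously, your description of the auxiliary-polynomial step does not work as written and misidentifies the source of the two terms in $\mathcal{E}$. You cannot ``choose a subfamily of the products $\alpha_i\overline{\alpha_j}$'' and still land in $\mathbb{Z}[x]$: any Galois-stable construction forces the full symmetric product, whose degree is fixed by $d$ and whose height is $\ll_d H^{(d-1)(d-2)(d-3)}$, not $H^{2(d-1)(d-2)}$. The two exponents in $\mathcal{E}$ are not a degree bound and a height bound on a single $F$; they arise from two disjoint cases. The term $2(d-1)(d-2)$ handles the situation where some $\alpha_i$ with $i\geq k$ is real, via a direct appeal to the real/complex case of Theorem~\ref{thm:BDFPS}. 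The term $(d-1)(d-2)(d-3)/(2\mu)$ handles the all-complex case, and the division by $\mu$ is \emph{not} a degree reduction: one uses the full polynomial $Q(x)=\prod_{(i,j,s,t)}(x-(\alpha_i\alpha_j-\alpha_s\alpha_t)^2)/x^\delta$, observes that it has at least $\mu$ positive real roots of the form $(|\alpha_i|^2-|\alpha_{d-1}|^2)^2$ (one per complex-conjugate pair among the ``middle'' indices guaranteed by the lemma above), and then runs a Rolle/mean-value argument on $Q,Q',\ldots,Q^{(\mu-1)}$ to show that a polynomial of height $\ll H^{(d-1)(d-2)(d-3)}$ with $\mu$ clustered positive roots must have its smallest root $\gg H^{-(d-1)(d-2)(d-3)/\mu}$. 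A straight Liouville inequality on a single root, as you propose, only recovers the undivided exponent. Finally, the extra saving $H^{1/(k(d-1))}$ does not come from bounds on $r_{d-1}$; it comes from the upper bound $|\alpha_k|\ll H^{-1/(k(d-1))}$ (obtained after assuming $|\alpha_k|-|\alpha_{d-1}|<|\alpha_0|^{-1/(d-1)}$), which is used when passing from $|\alpha_k|^2-|\alpha_{d-1}|^2$ back to $|\alpha_k|-|\alpha_{d-1}|$.
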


\begin{remark}
The ``extra saving term'' $H^{1/(k(d-1))}$ is possible due to some ad hoc arguments that are specific for the  problem that we consider. In any case, it is much smaller than (the reciprocal of) the ``main term'' $H^{-\mathcal{E}}$. Note that except for a few cases, we have $\mathcal{E}=(d-1)(d-2)(d-3)/(2\mu)$. Indeed, observe that
$$\frac{(d-1)(d-2)(d-3)}{2\mu}<2(d-1)(d-2) \text{ implies } d-3<4\left(\frac{(d/3)+1-k}{2}+1\right).$$
The last inequality yields $k=1$ and $4\leq d\leq 20$ or $k=2$ and $7\leq d\leq 14$. In fact, by checking each case, we know exactly when which expression is the maximum:
$$\mathcal{E}= \begin{cases}
  2(d-1)(d-2)  & \text{if}\ (d,k)\in\{(4,1),(5,1),(6,1),(10,1)\},\\
  \displaystyle\frac{(d-1)(d-2)(d-3)}{2\mu} & \text{otherwise}.
\end{cases}$$
\end{remark}

\begin{remark}
    We cannot replace $3k$ in the inequality $d>3k$ by a smaller quantity in order to have $\vert\alpha_0\vert\vert\alpha_1\vert^{c_1}\cdots\vert\alpha_k\vert^{c_k}>1$ for every $(c_1,\ldots,c_k)\in E_{k,d}$
    and for $\alpha$ that is not a root of unity.
    To see why, suppose $d=3k$. Note that $(0,\ldots,0,2)$ is a vertex of $E_{k,3k}$ by taking $J=\{k\}$ in Theorem~\ref{thm:EkdShape}. Consider a cubic field $K\subset\R$ with 2 complex-conjugate embeddings. Let $\xi>1$ be the fundamental unit of $K$ and let $\xi_1$ and $\xi_2$ be the remaining Galois conjugates of $\xi$. Hence $\xi$ is not a $p$-th power of an element of $K$ for every prime $p$, and by considering $\operatorname{Nm}_{K/\Q}(-\xi/4)$, we have that $-\xi/4$ is not a $4$-th power in $K$. By 
    \cite[p.~297]{LangAlg}, $\alpha_0:=\xi^{1/k}$ has degree $3k$. In our notation, $\alpha_0,\ldots,\alpha_{k-1}$ are the $k$-th roots of $\xi$ while $\alpha_k,\ldots,\alpha_{3k-1}$ are the $k$-th roots of $\xi_1$ and $\xi_2$. From 
    $\vert\xi\vert\vert\xi_1\vert^2=\vert\xi\vert\vert\xi_2\vert^2=1$, 
    we have $\vert\alpha_0\vert\vert\alpha_k\vert^2=1$.
\end{remark}

Our result in Theorem \ref{thm:quantitative} is closely related to \cite[Theorem~1]{BDFPS2022} and its proof. In fact, a direct application of \cite[Theorem~1]{BDFPS2022}  would give the weaker version of Theorem~\ref{thm:quantitative} in which $\displaystyle\frac{(d-1)(d-2)(d-3)}{2\mu}$
is replaced by $\displaystyle\frac{(d-1)(d-2)(d-3)}{2}$. Note that when $d-3k\gg d$ (for example, if $d>4k$ then $d-3k>d/4$), then the improvement in Theorem~\ref{thm:quantitative} is significant
since we have $\mathcal{E}=O(d^2)$ and we get a lower bound of the form $1+ C H^{-O(d^2)}$ instead of one of the form $1 + C H^{-O(d^3)}$. This improvement is possible thanks to an adaptation of the arguments in \cite{BDFPS2022} to our current problem. In \cite[Section~4]{BDFPS2022}, the authors comment that the exponent in their lower bounds seems too high. We also believe that the exponent $\mathcal{E}$ in Theorem~\ref{thm:quantitative} is far from optimal.

The organization of this paper is as follows. In Section \ref{sec:auxiliary}, we gather several auxiliary results that are needed for the proofs of our main theorems. These include various estimates involving the roots of special families of polynomials that give rise to the vertices of $E_{k,d}$, a general result confirming that the intersection of certain $2k$ many half-spaces is a polytope of $2^k$ many vertices, and an absolute root separation theorem by Bugeaud, Dujella, Fang, Pejkovi{\'c}, and Salvy \cite{BDFPS2017,BDFPS2022}. Then the proofs of Theorem~\ref{thm:EkdShape} and Theorem~\ref{thm:quantitative} are given in Sections \ref{sec:thm1} and \ref{sec:thm2}, respectively.

\section{Auxiliary Results} \label{sec:auxiliary}
\subsection{Some Useful Families of Polynomials}
We are especially interested in the following family of trinomials.
\begin{definition}\label{fdkhDef}
    For any integers $d,j,h \in \Z$ where $d > j > 0$, let $f_{d,j,h}(x)$ denote \[f_{d,j,h}(x) = x^d - hx^j + 1.\]
\end{definition}

We note that there are infinitely many integers $h$ so that $f_{d,j,h}(x)$ is irreducible in $\Z[x]$. This follows directly from the Hilbert Irreducibility Theorem: see \cite[Theorem 4]{VGR} and take $f(x,y)=f_{d,j,y}(x) = x^d - yx^j + 1$, which is irreducible in $\mathbb{Z}[x,y]$. We now turn our attention to the sizes of the roots of $f_{d,j,h}(x)$.

\begin{proposition}\label{prop:RootAnnulus}
    For any integers $d,j$, and $h$ with $d > j > 0$ and $|h| \geq  3$, and for any real $\epsilon$ satisfying \[\frac{1}{|h|} < \epsilon < 1 - \frac{1}{|h|},\] the polynomials $f_{d,j,h}(x)$ has $d-j$ roots in the annulus \[\left((1-\epsilon)|h| \right)^{\frac{1}{d-j}} < |z| < \left((1+\epsilon)|h| \right)^{\frac{1}{d-j}}.\]
    and $j$ roots in the annulus \[\left((1+\epsilon)|h| \right)^{-\frac{1}{j}} < |z| < \left((1-\epsilon)|h| \right)^{-\frac{1}{j}}.\]
\end{proposition}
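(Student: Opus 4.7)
\smallskip
\noindent\textbf{Proof proposal.} The plan is to apply Rouché's theorem on each of the four circles
\[|z|=R_{\pm} := ((1\pm\epsilon)|h|)^{1/(d-j)}, \qquad |z|=r_{\pm} := ((1\pm\epsilon)|h|)^{-1/j},\]
comparing $f_{d,j,h}(x)=x^d-hx^j+1$ to whichever of its monomials (or binomial pairs of monomials) dominates on that circle. The hypothesis $1/|h|<\epsilon<1-1/|h|$ is equivalent to the two strict bounds $\epsilon|h|>1$ and $(1-\epsilon)|h|>1$, and these will drive every Rouché estimate.

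On $|z|=R_+$, I expect $x^d$ to dominate: since $|x^d|/|hx^j|=R_+^{d-j}/|h|=1+\epsilon$, one has $|x^d|-|hx^j|=\epsilon|h|R_+^j \geq \epsilon|h|>1$ (using $R_+>1$, which is itself a consequence of $(1+\epsilon)|h|>1$). Thus $|x^d|>|{-hx^j+1}|$ on the circle, and Rouché yields $d$ zeros of $f_{d,j,h}$ inside $|z|=R_+$. On $|z|=R_-$, the middle term $-hx^j$ should dominate: the analogous arithmetic gives $|hx^j|-|x^d|=\epsilon|h|R_-^j$, and $(1-\epsilon)|h|>1$ forces $R_->1$, so this exceeds $1$ and Rouché returns $j$ zeros (those of $-hx^j$) inside $|z|=R_-$. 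Subtracting, $f_{d,j,h}$ has exactly $d-j$ roots in the outer annulus.

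On $|z|=r_+$, the binomial $-hx^j+1$ should dominate $x^d$: a direct calculation gives $|hx^j|=1/(1-\epsilon)$, hence $|{-hx^j+1}|\geq \epsilon/(1-\epsilon)$, while $|x^d|=((1-\epsilon)|h|)^{-d/j}$. Clearing denominators reduces the required inequality to $(\epsilon|h|)\cdot((1-\epsilon)|h|)^{(d-j)/j}>1$, in which both factors exceed $1$ by hypothesis. Since the $j$ roots of $1-hx^j$ all lie on $|x|=|h|^{-1/j}<r_+$, Rouché yields $j$ zeros of $f_{d,j,h}$ inside $|z|=r_+$. Finally, on $|z|=r_-$ I would show $|x^d-hx^j|<1$: bounding by $|x^d|+|hx^j|$ and a parallel calculation reduces this to $(\epsilon|h|)\cdot((1+\epsilon)|h|)^{(d-j)/j}>1$, which again holds. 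Hence $f_{d,j,h}$ has no zeros inside $|z|=r_-$, and $j$ zeros lie in the inner annulus.

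The only real obstacle is bookkeeping: one must verify at each of the four circles that the chosen comparison function has its own zeros strictly inside the contour, and that the Rouché inequality is strict on the boundary. Both concerns are resolved uniformly by the strictness of $\epsilon|h|>1$ and $(1-\epsilon)|h|>1$. As a final sanity check, the two annular counts sum to $(d-j)+j=d$, accounting for every root of $f_{d,j,h}$.
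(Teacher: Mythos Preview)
Your argument is correct in substance, but you have swapped the labels $r_+$ and $r_-$ in the last two paragraphs: with your definition $r_\pm=((1\pm\epsilon)|h|)^{-1/j}$, one has $r_+<r_-$, and on $|z|=r_+$ the value of $|hx^j|$ is $1/(1+\epsilon)$, not $1/(1-\epsilon)$. The computations you perform under the heading ``On $|z|=r_+$'' actually pertain to the circle $|z|=r_-$ (yielding $j$ roots inside), and those under ``On $|z|=r_-$'' pertain to $|z|=r_+$ (yielding no roots inside); once the labels are interchanged, the conclusion is the correct one.

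Both your proof and the paper's hinge on Rouch\'e's theorem at the boundary circles, but the executions differ. The paper treats only the outer annulus directly---comparing $x^d-hx^j$ with $1$ on the outer circle and $-hx^j$ with $x^d+1$ on the inner---and then deduces the inner-annulus count by passing to the reciprocal polynomial $f_{d,j,h}^*=f_{d,d-j,h}$. You instead run Rouch\'e on all four circles with your own monomial/binomial splittings. Your approach is slightly longer but arguably cleaner at one step: on $|z|=R_-$ you observe directly that $|hx^j|-|x^d|=\epsilon|h|R_-^j>\epsilon|h|>1$, whereas the paper rewrites the same inequality as $\epsilon^{d-j}(1-\epsilon)^j>|h|^{-d}$ and verifies it via an auxiliary analysis of $m(t)=t^{d-j}(1-t)^j$ on $[1/|h|,1-1/|h|]$. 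The reciprocal-polynomial trick, on the other hand, halves the work and makes the symmetry between the two annuli transparent.
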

\begin{proof}
     We first analyze the larger annulus.  We will apply Rouch\'e's theorem to  $g(x) = x^d - hx^j$, $\ell(x) = 1$, and the circle of radius $R = \left((1+\epsilon)|h| \right)^{\frac{1}{d-j}}$ centered at the origin. On this circle, we have
    \begin{align*}
        |g(x)|  &\geq |x^{j}(x^{d-j} - h)| \\
                  &\geq \big((1+\epsilon)|h| \big)^{\frac{j}{d-j}} \cdot ((1+\epsilon)|h| - |h|)\\
                &= \epsilon (1+\epsilon)^{\frac{j}{d-j}} |h|^{\frac{d}{d-j}} \; > 1.
    \end{align*}
    
    The last inequality follows from the choice of $\epsilon  > |h|^{-1} > |h|^{ - \frac{d}{d-j}}$. Thus, all roots $\alpha$ of $f_{d, j, h}(x)$ satisfy $|\alpha| < \big((1+\epsilon)|h| \big)^{\frac{1}{d-j}}.$
    
    For the smaller circle of radius $r =\left((1-\epsilon)|h| \right)^{\frac{1}{d-j}}$, we apply Rouch\'e's theorem to the polynomials $u(x) = x^d + 1$ and $v(x) = -hx^j$. On this circle, we have
    \begin{align*}
        |v(x)| &= |h|\big((1-\epsilon)|h| \big)^{\frac{j}{d-j}} = |h|^{\frac{d}{d-j}} (1-\epsilon)^{\frac{j}{d-j}} \\
        |u(x)| &\leq |x|^d + 1 = \big((1-\epsilon)|h| \big)^{\frac{d}{d-j}} +1.
    \end{align*}
    Thus,
    \begin{align*}
        \left|\frac{u(x)}{v(x)}\right| &\leq \frac{\big((1-\epsilon)|h| \big)^{\frac{d}{d-j}} +1}{|h|^{\frac{d}{d-j}} (1-\epsilon)^{\frac{j}{d-j}}} = 1 - \epsilon + \frac{1}{|h|^{\frac{d}{d-j}}(1-\epsilon)^{\frac{j}{d-j}}}.
    \end{align*}
    
    As a consequence, we will be able to conclude that $|u(x)| < |v(x)|$ if we can show that $|h|^{-d} < \epsilon^{d-j}(1-\epsilon)^j$. The real function $m(t) = t^{d-j}(1-t)^j$ has a unique critical point in $(0,1)$, and moreover, that critical point is a maximum.  Hence, for any closed interval $I \subset [0,1]$, the function $m(t)$ is minimized at one of the endpoints of $I$.  Since $\epsilon \in \left(\frac{1}{|h|},1-\frac{1}{|h|}\right)$, we have that
    \begin{align*}
        \epsilon^{d-j}(1 - \epsilon)^j &> \min\left(\left(\frac{1}{|h|}\right)^{d-j}\left(1-\frac{1}{|h|}\right)^j, \left(1 - \frac{1}{|h|}\right)^{d-j}\left(\frac{1}{|h|}^{j}\right)\right) \\
        &= \min\left(\frac{(|h| - 1)^{j}}{|h|^d},\frac{(|h|-1)^{d-j}}{|h|^d}\right) > \frac{1}{|h|^d}.        
    \end{align*}
     Therefore $|v(x)| > |u(x)|$ on the circle of radius $r$. By Rouch\'e's Theorem, $f_{d,j,h}(x)$ 
    has exactly $j$ roots inside the circle (and none on the circle). Hence
    $f_{d,j,h}$ has exactly $d-j$ roots in the annulus $r<|z|<R$.

    For the smaller annulus, observe that the reciprocal polynomial of $f_{d,j,h}(x)$ (which we denote by $f_{d,j,h}^*(x)$) is equal to $f_{d,d-j,h}(x)$.  By applying the first half of our argument to $f_{d,j,h}^*(x) = f_{d,d-j,h}(x)$, we find that $f_{d,j,h}^*(x)$ has $j$ roots satisfying \[\left((1-\epsilon)|h| \right)^{\frac{1}{j}} < |z| < \left((1+\epsilon)|h| \right)^{\frac{1}{j}}.\]  But this exactly implies that $f_{d,j,h}(x)$ has $j$ roots in the annulus \[\left((1+\epsilon)|h| \right)^{-\frac{1}{j}} < |z| < \left((1-\epsilon)|h| \right)^{-\frac{1}{j}}.\]
\end{proof}

Choosing $\epsilon = \frac{1}{2}$ in Proposition \ref{prop:RootAnnulus} gives us the following corollary.
\begin{corollary}\label{cor:fdkhAnnuli}
    For any integers $d,j,$ and $h$ with $d > j > 0$ and $|h| \geq 3$, the polynomial $f_{d,j,h}(x)$ has $j$ roots in the annulus \[\left(\frac{2}{3|h|}\right)^{1/j} < |z| < \left(\frac{2}{|h|}\right)^{1/j}\] and $d-j$ roots in the annulus \[\left(\frac{|h|}{2}\right)^{1/(d-j)} < |z| < \left(\frac{3|h|}{2}\right)^{1/(d-j)}.\]
\end{corollary}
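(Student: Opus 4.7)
The plan is to apply Proposition \ref{prop:RootAnnulus} directly with the choice $\epsilon = \tfrac{1}{2}$. The only thing I need to check is that this value of $\epsilon$ lies in the admissible range $\bigl(\tfrac{1}{|h|},\, 1 - \tfrac{1}{|h|}\bigr)$ given by the hypothesis of the proposition. Since $|h| \geq 3$, we have $\tfrac{1}{|h|} \leq \tfrac{1}{3} < \tfrac{1}{2}$ and $1 - \tfrac{1}{|h|} \geq \tfrac{2}{3} > \tfrac{1}{2}$, so $\epsilon = \tfrac{1}{2}$ is indeed allowed.

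Next, I would simply substitute $\epsilon = \tfrac{1}{2}$ into the two annuli provided by Proposition \ref{prop:RootAnnulus}. The outer annulus becomes
\[
\bigl((1 - \tfrac{1}{2})|h|\bigr)^{1/(d-j)} < |z| < \bigl((1 + \tfrac{1}{2})|h|\bigr)^{1/(d-j)},
\]
which is exactly $(|h|/2)^{1/(d-j)} < |z| < (3|h|/2)^{1/(d-j)}$, containing $d-j$ roots. The inner annulus becomes
\[
\bigl((1 + \tfrac{1}{2})|h|\bigr)^{-1/j} < |z| < \bigl((1 - \tfrac{1}{2})|h|\bigr)^{-1/j},
\]
which rearranges to $\bigl(\tfrac{2}{3|h|}\bigr)^{1/j} < |z| < \bigl(\tfrac{2}{|h|}\bigr)^{1/j}$, containing $j$ roots.

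There is no real obstacle here — the corollary is a direct specialization of the proposition, and the only substantive step is verifying the admissibility of $\epsilon = \tfrac{1}{2}$ under the hypothesis $|h| \geq 3$.
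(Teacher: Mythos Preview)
Your proposal is correct and matches the paper's own approach exactly: the corollary is obtained simply by choosing $\epsilon = \tfrac{1}{2}$ in Proposition~\ref{prop:RootAnnulus}, and your verification that $\tfrac{1}{|h|} < \tfrac{1}{2} < 1 - \tfrac{1}{|h|}$ when $|h| \geq 3$ is the only thing needed.
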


\begin{remark}\label{rem:SZ bound can't happen}
    Fix $d\geq 2$, let $j=d-1$, and write $f_h(x)=x^d-hx^{d-1}+1$ where $h$ is an integer such that $|h|\geq 3$ and $f_h$ is irreducible over $\Q$. Let $r_0,\ldots,r_{d-1}$ be roots of 
    $f_h$ as in Notation~\ref{notation}. In Proposition~\ref{prop:RootAnnulus}, take $\epsilon=1.1/|h|$ so that
    $$|r_0|<(1+\epsilon)|h|\ \text{and}\ |r_1|<\left((1-\epsilon)|h|\right)^{-1/(d-1)}.$$
    Therefore,
    $$|r_0||r_1|^{d-1}<\frac{1 + \epsilon}{1-\epsilon} = 1 + \frac{2.2}{|h| - 1.1}.$$
     
    Consequently, we see a contrast between expected lower bounds for $|\alpha_0|$ and $|\alpha_0||\alpha_1|^{d-1}$.  When $\alpha$ is an algebraic integer of degree $d$ which is not a root of unity, the Schinzel-Zassenhaus conjecture (proved by Dimitrov) predicts a lower bound for $|\alpha_0|$ of the form $1 + C/d$ where $C$ is an absolute constant.  However, this example shows that no similar lower bound will hold for $|\alpha_0||\alpha_1|^{d-1}$.  Instead, a lower bound on $|\alpha_0||\alpha_1|^{d-1}$ must involve the height of the minimal polynomial of $\alpha$.
\end{remark}

\subsection{The Vertices of Certain Polytopes}
  For a more detailed introduction, see \cite[Chapters~2, 3]{BG2003}.
A half-space in $\mathbb{R}^k$ is a subset defined by a single linear inequality $$a_1x_1 + a_2x_2 + \cdots + a_kx_k \geq \ell$$ for some $a_1,\dots,a_k,\ell \in \R$ where the $a_i$'s are not all $0$.  An intersection of finitely many half-spaces is called a polyhedral set. A polytope is a bounded polyhedral set. Equivalently, a polytope is the convex hull of finitely many points.  A point $p$ of a polyhedral set $S$ is called an extreme point if whenever 
$p=\lambda p_1+(1-\lambda)p_2$ for some $p_1,p_2\in S$ and $\lambda\in (0,1)$ then $p=p_1=p_2$.
A polytope has finitely many extreme points which are called vertices, and it is the convex hull of its vertices. The following description of the vertices is well-known, but we include a simple proof for the sake of completeness.

\begin{lemma}\label{lem:all vertices}
    Let $I$ be a non-empty set. For $i\in I$, let $L_i$ be a non-zero real linear form in the indeterminates $x_1,\ldots,x_k$, and let $\ell_i\in\mathbb{R}$. Suppose the intersection $P$
    of the half-spaces
    $$L_i(x_1,\ldots,x_k)\geq \ell_i\ \text{for $i\in I$}$$
    is a polytope. Let $J\subseteq I$ with $\vert J\vert=k$ such that the linear system
    $$L_j(x_1,\ldots,x_k)=\ell_j\ \text{for $j\in J$}$$
    has a unique solution $(c_1,\ldots,c_k)$ which belongs to $P$. Then $(c_1,\ldots,c_k)$ is a vertex of $P$. Moreover, every vertex of $P$ arises this way.
\end{lemma}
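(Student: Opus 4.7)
The plan is to verify both assertions by unpacking the definition of an extreme point together with basic linear algebra.

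For the forward direction, suppose $|J|=k$ and the system $L_j=\ell_j$ ($j\in J$) admits a unique solution $p=(c_1,\ldots,c_k)$ lying in $P$. To show $p$ is extreme, I would write $p=\lambda p_1+(1-\lambda)p_2$ with $p_1,p_2\in P$ and $\lambda\in(0,1)$. For each $j\in J$ we have $L_j(p_1)\geq\ell_j$ and $L_j(p_2)\geq\ell_j$, while linearity of $L_j$ gives $\lambda L_j(p_1)+(1-\lambda)L_j(p_2)=L_j(p)=\ell_j$. Since $\lambda,1-\lambda>0$, this forces $L_j(p_1)=L_j(p_2)=\ell_j$ for every $j\in J$, and the uniqueness hypothesis then gives $p_1=p_2=p$. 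Hence $p$ is a vertex.

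For the converse, let $v$ be a vertex of $P$ and set $J_v=\{i\in I : L_i(v)=\ell_i\}$, the set of active constraints at $v$. The crucial step is to show that the family of linear forms $\{L_i\}_{i\in J_v}$ has rank $k$. Arguing by contradiction, suppose there is a nonzero $w\in\R^k$ annihilated by every $L_i$ with $i\in J_v$. Along the line $v+tw$ the active constraints are preserved with equality, while for the remaining (inactive) constraints we have strict inequality at $v$, so a continuity argument yields some $\epsilon>0$ for which both $v+\epsilon w$ and $v-\epsilon w$ lie in $P$. But then $v=\tfrac{1}{2}(v+\epsilon w)+\tfrac{1}{2}(v-\epsilon w)$ expresses $v$ as a nontrivial convex combination, contradicting extremality. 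Therefore $\{L_i\}_{i\in J_v}$ has rank $k$, and selecting any $J\subseteq J_v$ of size $k$ on which these forms are linearly independent produces a subsystem with a unique solution, necessarily $v$ itself.

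The only mild obstacle is the perturbation step in the converse: since the hypotheses allow an arbitrary (possibly infinite) index set $I$, the continuity argument must handle the strict inequalities uniformly. This is resolved by invoking the standard fact that a polytope admits a finite irredundant half-space representation, so only finitely many inactive constraints need to be controlled on a neighborhood of $v$; in the intended applications in this paper the index set $I$ is finite, making this concern vacuous.
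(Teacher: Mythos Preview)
Your proof is correct and follows essentially the same route as the paper's: a convex-combination argument for the forward direction, and for the converse the active-constraint set $J_v$ together with a small line-segment perturbation to force full rank, then extraction of a size-$k$ independent subset. If anything, you are slightly more careful than the paper in flagging the case of infinite $I$ and resolving it via a finite irredundant half-space description; the paper leaves this step implicit.
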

\begin{proof}
    Write $\mathbf{c}=(c_1,\ldots,c_k)$. If it is not a vertex then we have distinct points $p_1,p_2\in P$ and $\lambda\in (0,1)$ such that $\mathbf{c}=\lambda p_1+(1-\lambda)p_2$. For $j\in J$, from
    $L_j(\mathbf{c})=\ell_j$, $L_j(p_1)\geq \ell_j$, and $L_j(p_2)\geq \ell_j$, we must have
    $L_j(p_1)=L_j(p_2)=\ell_j$. This violates the assumption that the linear system has a unique solution. Therefore $\mathbf{c}$ is a vertex.

    Now suppose that $\mathbf{v}$ is a vertex of $P$. Let $J'=\{i\in I:\ L_i(\mathbf{v})=\ell_i\}$, hence $L_i(\mathbf{v})>\ell_i$ for $i\in I\setminus J'$. If $\mathbf{v}$ is not the unique solution of the linear system
    \begin{equation}\label{eq:for bfv}
    L_j(x_1,\ldots,x_k)=\ell_j\ \text{for $j\in J'$}
    \end{equation}
    then the solution set contains a line passing through $\mathbf{v}$. Hence there is a sufficiently small line segment $S$ such that $\mathbf{v}\in S$, $\mathbf{v}$ is not an endpoint of $S$, $L_j(s)=\ell_j$ for $s\in S$ and $j\in J'$, and $L_i(s)>\ell_i$ for $s\in S$ and $i\in I\setminus J'$. Therefore $S\subseteq P$ and hence $\mathbf{v}$ cannot be a vertex, contradiction. 
    We have proved that $\mathbf{v}$ is the unique solution of \eqref{eq:for bfv}. Then we simply 
    find $J''\subseteq J'$ with $\vert J''\vert=k$ such that $\mathbf{v}$ is the unique solution of the system $L_{j}=\ell_j$ for $j\in J''$. This finishes the proof.
\end{proof}

This lemma indicates that each vertex of a polytope in $\R^k$ comes from a set of $k$ equations of the form $\{L_i(x_1,\dots,x_k) = \ell_i\}$.  However, given only the inequalities $L_i(x_1,\dots,x_k) \geq \ell_i$, it is not clear which size-$k$ sets of equations $\{L_i(x_1,\dots,x_k) = \ell_i\}$ will yield vertices and which will not.  This is called the vertex enumeration problem, which is a fundamental problem in combinatorics and computer science. While there are many algorithms to solve this, it is proved to be NP-hard \cite{KBBEG2008} (however the authors remark that their proof involves unbounded polyhedral sets and hence the hardness of the vertex enumeration problem for polytopes remains open). Therefore, it is interesting that we can describe all the vertices in the below special families of polytopes that, as we will show, include the set $E_{k,d}$.

\begin{proposition}\label{prop:verticesAbstract}
    Let $a_1,\ldots,a_k,b_1,\ldots,b_k$ be positive numbers such that $a_ib_j-a_jb_i>0$ for $1\leq i<j\leq k$. Then the intersection $P$ of the following $2k$ half-spaces:    
    \begin{equation}\label{eq:2k with ai bi}
    \begin{aligned}
         a_1 - b_1 x_1  - \cdots - b_1 x_k & \geq 0 \\
    		 a_2 + a_2 x_1 - b_2 x_2 - \cdots - b_2 x_k & \geq 0 \\ 
    		\vdots \\
    		 a_k + a_k x_1 + \cdots + a_k x_{k-1} - b_k x_k & \geq 0 \\
        x_1 \geq 0, \; x_2 \geq 0, \; \cdots, \; x_k &\geq 0 \\   
    \end{aligned}
    \end{equation}
    is a $k$-dimensional polytope with $2^k$ many vertices that can be described as follows. 
    
    For $1\leq i\leq k$, let $L_i$ denote the left-hand side of the $i$-th inequality in \eqref{eq:2k with ai bi}. For each subset $I$ of $\{1,\ldots,k\}$, the linear system
    $$L_i(x_1,\ldots,x_k)=0\ \text{for $i\in I$ and}\ x_j=0\ \text{for $j\in\{1,\ldots,k\}\setminus I$}$$
    has a unique solution that is a vertex of $P$. When $I$ runs over the $2^k$ many subsets of 
    $\{1,\ldots,k\}$, we obtain all the $2^k$ many vertices of $P$.
\end{proposition}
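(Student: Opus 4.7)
The plan is to apply Lemma~\ref{lem:all vertices} to the $2k$ half-spaces defining $P$. Writing $\rho_i = b_i/a_i$, the hypothesis $a_ib_j - a_jb_i > 0$ for $i < j$ becomes the strict monotonicity $\rho_1 < \rho_2 < \cdots < \rho_k$, which will be the engine of the whole argument. First, I would verify that $P$ is a $k$-dimensional polytope: the inequality $L_1 \geq 0$ combined with $x_j \geq 0$ forces $\sum_j x_j \leq a_1/b_1$ (so $P$ is bounded), and $(\varepsilon,\ldots,\varepsilon)$ for small $\varepsilon > 0$ satisfies every defining inequality strictly (so $P$ has nonempty interior).

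Next, for each subset $I = \{i_1 < \cdots < i_m\} \subseteq \{1,\ldots,k\}$, I would solve the prescribed system $\{L_i = 0 : i \in I\} \cup \{x_j = 0 : j \notin I\}$ explicitly. Setting $y_\ell := x_{i_\ell}$, $T_\ell := y_1 + \cdots + y_\ell$ (with $T_0 = 0$), and $S_\ell := y_\ell + \cdots + y_m$, the equation $L_{i_\ell} = 0$ becomes $1 + T_{\ell-1} = \rho_{i_\ell} S_\ell$. Combined with the identity $T_{\ell-1} + S_\ell = T_m =: \Sigma$, this yields
\[ \Sigma = \frac{1}{\rho_{i_1}}, \qquad T_{\ell-1} = \frac{1}{\rho_{i_1}} \cdot \frac{\rho_{i_\ell} - \rho_{i_1}}{1 + \rho_{i_\ell}}. \]
Since $\rho \mapsto (\rho - \rho_{i_1})/(1+\rho)$ is strictly increasing, the sequence $(T_\ell)$ is strictly increasing, so each $y_\ell = T_\ell - T_{\ell-1} > 0$. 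A short case split on whether $i < i_1$, $i_\ell < i < i_{\ell+1}$, or $i > i_m$ then uses the monotonicity of $\rho$ to confirm $L_i > 0$ for every $i \notin I$. Hence the unique solution $\mathbf{v}_I$ lies in $P$, and by Lemma~\ref{lem:all vertices} it is a vertex; since the support of $\mathbf{v}_I$ equals $I$, distinct subsets yield distinct vertices.

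The main obstacle is the converse: showing that every vertex of $P$ arises as some $\mathbf{v}_I$. Given any vertex $\mathbf{v}$ with support $S = \{i_1 < \cdots < i_m\}$, I would restrict attention to the slice $H_S = \{x_j = 0 : j \notin S\}$, where the non-negativity constraints are inactive. The key observation is that for each $\ell \in \{0, 1, \ldots, m-1\}$ (with the convention $i_0 = 0$) and each $i$ satisfying $i_\ell < i \leq i_{\ell+1}$, the restricted inequality $L_i \geq 0$ takes the common form $S_{\ell+1} \leq (1 + T_\ell)/\rho_i$, while $L_i > 0$ automatically whenever $i > i_m$. Because $\rho_i$ attains its maximum within each block at the right endpoint $i_{\ell+1} \in S$, the tightest constraint per block is $L_{i_{\ell+1}} \geq 0$; consequently, at any point of $P \cap H_S$ with strictly positive coordinates, the only $L$-constraints that can be active are $L_{i_1} = 0, \ldots, L_{i_m} = 0$. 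For $\mathbf{v}$ to be a vertex of $P$, these $m$ constraints must all hold simultaneously and be linearly independent (since the $k - m$ active $x$-constraints supply no new information within $H_S$). This is precisely the defining system of $\mathbf{v}_S$, so $\mathbf{v} = \mathbf{v}_S$, completing the enumeration of vertices as exactly the $2^k$ points $\{\mathbf{v}_I\}$.
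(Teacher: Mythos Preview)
Your proof is correct and takes a genuinely different route from the paper's argument. The paper proceeds by induction on $k$: its key technical step is a Claim that no point of $P$ can satisfy both $L_i=0$ and $x_i=0$ for the same index $i$, which (via Lemma~\ref{lem:all vertices}) forces any vertex-producing $k$-subset of constraints to be a complementary pair $(I,\{1,\ldots,k\}\setminus I)$. The induction then reduces to dimension $k-1$ by eliminating $x_k$: either setting $x_k=0$ (when $k\notin I$) or substituting $x_k=(a_k/b_k)(1+x_1+\cdots+x_{k-1})$ (when $k\in I$), and checking that the resulting polytope is again of the same shape with new parameters $a_i'=a_i-b_i a_k/b_k$, $b_i'=b_i(1+a_k/b_k)$.

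Your approach instead solves the linear system explicitly via the partial sums $T_\ell$ and the monotonicity of the ratios $\rho_i=b_i/a_i$, obtaining closed-form coordinates for each $\mathbf{v}_I$; for the converse you observe that on the slice $H_S$ the constraints $L_i\geq 0$ group into blocks indexed by consecutive elements of $S$, and within each block only the constraint at the right endpoint $i_{\ell+1}\in S$ can be active. This ``tightest constraint per block'' observation plays exactly the role of the paper's Claim, but localised rather than global, and then counting active constraints against Lemma~\ref{lem:all vertices} finishes the job. The paper's inductive argument is structurally cleaner and avoids any explicit solving; your direct argument is more elementary and yields the concrete formulas $T_{\ell-1}=\rho_{i_1}^{-1}(\rho_{i_\ell}-\rho_{i_1})/(1+\rho_{i_\ell})$, which could be independently useful. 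One small point you should make explicit: the final inequality $T_{m-1}<T_m=\Sigma$ (needed for $y_m>0$) does not come from the monotonicity formula, since that formula only reaches $T_{m-1}$; it requires the one-line check $(\rho_{i_m}-\rho_{i_1})/(1+\rho_{i_m})<1$, equivalently $1+\rho_{i_1}>0$.
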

\begin{proof}
    The last $k$ inequalities together with the first inequality imply that $P$ is bounded. For a sufficiently small $\epsilon>0$ depending on the $a_i$'s and $b_i$'s, we have that
    $P$ contains the hypercube $[0,\epsilon]^k$. Therefore $P$ is a $k$-dimensional polytope.
    We prove the remaining assertions by induction on $k$. The case $k=1$ is obvious since $P$ is given by $a_1-b_1x_1\geq 0$ and $x_1\geq 0$. We consider $k\geq 2$ and assume that the proposition holds for smaller values of $k$.

    \textbf{Claim:} If $1\leq i\leq k$, then there does not exist $(c_1,\ldots,c_k)\in P$ such that $L_i(c_1,\ldots,c_k)=0$ and $c_i=0$. 

    To prove this claim, we suppose otherwise and arrive at a contradiction. If $i=k$ then we have
    $$c_k=\frac{a_k}{b_k}\left(1+c_1+\cdots+c_{k-1}\right)\ \text{and}\ c_k=0,$$
    contradiction. We consider $1\leq i\leq k-1$. From
    $$L_i(c_1,\ldots,c_k)=0,\ c_i=0,\ \text{and}\ L_{i+1}(c_1,\ldots,c_k)\geq 0,$$
    we have:
    $$\frac{a_i}{b_i}\left(1+c_1+\cdots+c_{i-1}\right)=c_{i+1}+\cdots+c_k\leq \frac{a_{i+1}}{b_{i+1}}\left(1+c_1+\cdots+c_{i-1}\right).$$
    This yields a contradiction since $a_i/b_i>a_{i+1}/b_{i+1}$. We finish proving the proposition.

    The above claim implies the following two statements:
    \begin{itemize}
        \item[(i)] If $I,J\subseteq\{1,\ldots,k\}$ such that $\vert I\vert+\vert J\vert=k$ and $I\cap J\neq \emptyset$ then the linear system
        $$L_i(x_1,\ldots,x_k)=0\ \text{for $i\in I$ and}\ x_j=0\ \text{for $j\in J$}$$
        does not have a solution in $P$.

        \item[(ii)] If $I_1$ and $I_2$ are distinct subsets of $\{1,\ldots,k\}$ then the linear system
        $$L_i(x_1,\ldots,x_k)=0\ \text{for $i\in I_1$ and}\ x_j=0\ \text{for $j\in\{1,\ldots,k\}\setminus I_1$}$$
        and the linear system
        $$L_i(x_1,\ldots,x_k)=0\ \text{for $i\in I_2$ and}\ x_j=0\ \text{for $j\in\{1,\ldots,k\}\setminus I_2$}$$
        do not have a common solution in $P$. 
    \end{itemize}

    By using Lemma~\ref{lem:all vertices}, it remains to show that for each subset $I$ of $\{1,\ldots,k\}$, the linear system
    \begin{equation}\label{eq:lin sys in proof}
    L_i(x_1,\ldots,x_k)=0\ \text{for $i\in I$ and}\ x_j=0\ \text{for $j\in \{1,\ldots,k\}\setminus I$}
    \end{equation}
    has a unique solution and this solution is in $P$.

    First, consider the case $k\notin I$, then the above system has the equation $x_k=0$. Then the inequality $L_k(x_1,\ldots,x_k)\geq 0$ becomes
    $a_k+a_kx_1+\cdots+a_kx_{k-1}\geq 0$ which holds automatically as long as the remaining $2k-2$ inequalities hold.
    Therefore we reduce to the case of the polytope in $\mathbb{R}^{k-1}$ 
    given by
    $$L_i(x_1,\ldots,x_{k-1},0)\geq 0\ \text{and}\ x_i\geq 0\ \text{for $1\leq i\leq k-1$}$$
    and apply the induction hypothesis.
    
    Now we assume $k\in I$, hence we have the equation
    $$a_k+a_kx_1+\cdots+a_kx_{k-1}-b_kx_k=0$$
    in \eqref{eq:lin sys in proof}. By substituting $x_k=\displaystyle\frac{a_k}{b_k}\left(1+x_1+\cdots+x_{k-1}\right)$
    and noting that the inequality $x_k\geq 0$ holds automatically as long as the remaining $2k-2$ inequalities hold, we arrive at the polytope $P'$ in $\mathbb{R}^{k-1}$ given by the intersection of $2k-2$ many half-spaces:
    \begin{equation}\label{eq:2k-2 with ai' bi'}
\begin{aligned}
     a_1' - b_1' x_1  - \cdots - b_1' x_{k-1} & \geq 0 \\
		 a_2' + a_2' x_1 - b_2' x_2 - \cdots - b_2' x_{k-1} & \geq 0 \\ 
		\vdots \\
		 a_{k-1}' + a_{k-1}' x_1 + \cdots + a_{k-1}'x_{k-2}- b_{k-1}' x_{k-1} & \geq 0 \\
    x_1 \ge 0, \; x_2 \ge 0 , \; \cdots \;, \; x_{k-1} &\ge 0 \\
\end{aligned}
\end{equation}
where $a_i'=a_i-b_i\cdot\frac{a_k}{b_k}$ and $b_i'=b_i\left(1+\frac{a_k}{b_k}\right)$ for $1\leq i\leq k-1$. For $1\leq i\leq k-1$, we denote the left-hand side of the $i$-th inequality in \eqref{eq:2k-2 with ai' bi'} by $L_i'(x_1,\ldots,x_{k-1})$.

From the conditions on the $a_i$'s and $b_i$'s, we have $a_i'>0$ and $b_i'>0$ for $1\leq i\leq k-1$. Moreover, for $1\leq i<j\leq k-1$, we have:
$$a_i'b_j'-a_j'b_i'=(a_ib_j-a_jb_i)\left(1+\frac{a_k}{b_k}\right)>0.$$
By the induction hypothesis, the linear system:
$$L_i'(x_1,\ldots,x_{k-1})=0\ \text{for $i\in I\setminus\{k\}$ and}\ x_j=0\  \text{for $j\in\{1,\ldots,k-1\}\setminus I$}$$
has a unique solution $(s_1,\ldots,s_{k-1})$ that is in $P'$. Put $s_k=\frac{a_k}{b_k}\left(1+s_1+\cdots+s_{k-1}\right)$, then $(s_1,\ldots,s_k)$ is the unique solution
of \eqref{eq:lin sys in proof} and it is in $P$. This finishes the proof.
\end{proof}

\subsection{Absolute root separation}
Let $P(x)\in \Z[x]$ with degree $d$, height $H$, and roots $r_0,\ldots,r_{d-1}$. Deriving a lower bound for
$$\min\{\vert\vert r_i\vert-\vert r_j\vert\vert:\ \vert r_i\vert\neq \vert r_j\vert\}$$
in terms of $H$ and $d$ is called the absolute root separation problem. This is considered in \cite{GS1996,DS2015,BDFPS2017,Dubickas2019,Sha2019,BDFPS2022}. In particular, we have the following result of Bugeaud, Dujella, Fang, Pejkovi\'c, and Salvy \cite{BDFPS2017,BDFPS2022}: 
\begin{theorem}\label{thm:BDFPS}
    Let $P(x)\in\Z[x]$ be  a polynomial of degree $d$ and let $\alpha$ and $\beta$ be two of its roots such that $\vert\alpha\vert\neq\vert\beta\vert$, then
    \begin{itemize}
        \item [(1)] if $\alpha$ and $\beta$ are real, then $\vert\vert\alpha\vert-\vert\beta\vert\vert\gg_d H^{-(d-1)}$;
        \item [(2)] if $\alpha$ is real and $\beta$ is not, then $\vert\vert\alpha\vert-\vert\beta\vert\vert\gg_d H^{-2(d-1)(d-2)}$;
        \item [(3)] if neither of them is real, then $\vert\vert\alpha\vert-\vert\beta\vert\vert\gg_d H^{-(d-1)(d-2)(d-3)/2}$. 
    \end{itemize}
\end{theorem}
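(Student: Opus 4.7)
The plan is to reduce each of the three cases to Mahler's classical root-separation inequality, which states that for $Q(x)\in\Z[x]$ of degree $n$ with distinct roots and Mahler measure $M(Q)$, any two distinct roots $\xi,\eta$ of $Q$ satisfy $\vert\xi-\eta\vert\gg_n M(Q)^{-(n-1)}$, and hence $\gg_n H(Q)^{-(n-1)}$ after using $M(Q)\leq\sqrt{n+1}\,H(Q)$. In each case I would construct an auxiliary integer polynomial whose roots include $\vert\alpha\vert$ and $\vert\beta\vert$ (or their squares), with degree and height tightly controlled in terms of $d$ and $H$. Converting from a bound on $\vert\vert\alpha\vert^r-\vert\beta\vert^r\vert$ to one on $\vert\vert\alpha\vert-\vert\beta\vert\vert$ would then be handled by dividing by a factor of the form $\vert\alpha\vert^{r-1}+\cdots+\vert\beta\vert^{r-1}$, which is at most a constant times $H^{r-1}$.

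For Case (1), both $\alpha$ and $\beta$ are real, so $\vert\vert\alpha\vert-\vert\beta\vert\vert$ equals either $\vert\alpha-\beta\vert$ (same sign) or $\vert\alpha+\beta\vert$ (opposite sign). In the first subcase I would apply Mahler's inequality directly to $P$, which gives $\vert\alpha-\beta\vert\gg_d H^{-(d-1)}$. For the second, $-\beta$ is a real root of $P(-x)\in\Z[x]$, and I would form the polynomial $R(y)\in\Z[y]$ of degree $d$ defined by $R(x^2)=(-1)^d P(x)P(-x)$, whose roots are the $\alpha_i^2$; Mahler applied to $R$ (with $M(R)=M(P)^2$) gives a bound on $\vert\alpha^2-\beta^2\vert$, and writing $\vert\alpha+\beta\vert=\vert\alpha^2-\beta^2\vert/\vert\alpha-\beta\vert$ with $\vert\alpha-\beta\vert\leq 2M(P)\ll_d H$ produces the desired estimate. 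A further refinement (exploiting the even structure of $R$, or using a direct real-root separation argument via the derivative and Rolle's theorem) will be needed to recover the sharp exponent $d-1$ rather than $2d-1$.

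For Cases (2) and (3), although $\vert\alpha\vert$ and $\vert\beta\vert$ are not themselves algebraic integers in general, their \emph{squares} are: when a root is non-real, $\vert\alpha\vert^2=\alpha\bar\alpha$ is the product of two conjugate roots of $P$; when real, it equals $\alpha^2$. I would therefore use the pair-product polynomial $S(y)=a^{\binom{d}{2}}\prod_{0\leq i<j\leq d-1}(y-\alpha_i\alpha_j)\in\Z[y]$ of degree $\binom{d}{2}$ (with $a$ the leading coefficient of $P$), possibly combined with the square polynomial $R(y)$ above in Case (2), where one of the two roots is real. Mahler's bound applied to the appropriate auxiliary polynomial then controls $\vert\vert\alpha\vert^2-\vert\beta\vert^2\vert$, and dividing by $\vert\alpha\vert+\vert\beta\vert\ll_d H$ yields the claimed exponents. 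The three different exponents $d-1$, $2(d-1)(d-2)$, and $(d-1)(d-2)(d-3)/2$ would then reflect the three distinct effective degrees of the auxiliary polynomials needed in the three cases.

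The main obstacle will be obtaining the \emph{sharp} exponents rather than merely some exponent: a naive application of Mahler's inequality to $P(x)P(-x)$ or to $\prod_{i,j}(y-\alpha_i\alpha_j)$ yields strictly weaker bounds than those stated. The real work lies in choosing the smallest-degree auxiliary polynomial containing the relevant roots and in carefully tracking its Mahler measure via Gelfond--Mahler-type multiplicativity inequalities and the action of Galois on pairs of roots of $P$, following the analysis in \cite{BDFPS2017,BDFPS2022}.
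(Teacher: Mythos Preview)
The paper does not prove this theorem at all: it is quoted verbatim as a result of Bugeaud, Dujella, Fang, Pejkovi\'c, and Salvy \cite{BDFPS2017,BDFPS2022}, with no accompanying argument. So there is no ``paper's own proof'' to compare against.

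That said, your plan has a genuine gap beyond the hedging in your final paragraph. You propose to apply Mahler's root-separation inequality to the pair-product polynomial $S(y)=\prod_{i<j}(y-\alpha_i\alpha_j)$ of degree $\binom{d}{2}$, separating $\vert\alpha\vert^2=\alpha\bar\alpha$ from $\vert\beta\vert^2=\beta\bar\beta$. Carrying this through gives an exponent of order $(d-1)\bigl(\binom{d}{2}-1\bigr)=(d-1)(d-2)(d+1)/2$, strictly worse than the stated $(d-1)(d-2)(d-3)/2$; you acknowledge this but offer no mechanism to close the gap. The actual argument in \cite{BDFPS2022} does \emph{not} use root separation on $S$. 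Instead it constructs an integer polynomial whose roots are the squares of the \emph{differences} $\alpha_i\alpha_j-\alpha_s\alpha_t$ over disjoint pairs $\{i,j\},\{s,t\}$ (this is precisely the polynomial that reappears in the proof of Theorem~\ref{thm:quantitative} in Section~\ref{sec:thm2}), shows its height is $\ll_d H^{(d-1)(d-2)(d-3)}$, and then applies only the trivial Cauchy-type bound that a nonzero root of an integer polynomial $Q$ has modulus $\gg 1/H(Q)$. Since $(\vert\alpha\vert^2-\vert\beta\vert^2)^2$ is itself a nonzero root, one obtains the exponent $(d-1)(d-2)(d-3)/2$ directly after taking a square root. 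The point is that by building the difference into the polynomial one avoids the extra factor of $\deg S-1$ that Mahler's separation inequality introduces; your ``further refinement'' is not a refinement of your approach but a replacement of it.
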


\begin{notation}
In the theorem above, we used the Vinogradov symbols $\gg_d$ and $\ll_d$ to mean that the implied constants depend \textit{only} on $d$.
\end{notation}


\section{Proof of Theorem~\ref{thm:EkdShape}} \label{sec:thm1}
To prove Theorem~\ref{thm:EkdShape}, we use the families of polynomials $f_{d,j,h}(x) = x^d - hx^j + 1$ to derive a useful collection of linear inequalities which every point in $E_{k,d}$ must satisfy.  Then we apply Proposition~\ref{prop:verticesAbstract} to those linear inequalities to find that $E_{k,d}$ must be contained in a polytope.  To show the reverse containment, we identify the vertices of that polytope, and show that each vertex belongs to $E_{k,d}$.

%
%
%
%

\begin{proposition}\label{prop:EkdIneqs}
    Suppose that $(c_1,\dots,c_k) \in E_{k,d}$.  Then for any $1 \leq j \leq k$, we have \[\frac{d-j}{j}+\frac{d-j}{j}c_1 + \cdots + \frac{d-j}{j}c_{j-1} - c_j - \dots - c_k \geq 0.\]
\end{proposition}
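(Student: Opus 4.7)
The plan is to exhibit, for each $1 \leq j \leq k$, a specific algebraic integer $\alpha$ of degree $d$ whose conjugates are very cleanly split into a ``large'' cluster and a ``small'' cluster with the split happening exactly at index $j$. Specifically, I will use the trinomial $f_{d,d-j,h}(x) = x^d - h x^{d-j} + 1$, which by Hilbert Irreducibility (as remarked after Definition~\ref{fdkhDef}) is irreducible in $\Z[x]$ for infinitely many integers $h$. Any root $\alpha$ of such a trinomial is an algebraic integer of degree exactly $d$, and so the inequality \eqref{eq:EkdDefiningIneq} must hold for $\alpha$ and the given tuple $(c_1,\ldots,c_k)$.

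Next, I would apply Corollary~\ref{cor:fdkhAnnuli} to $f_{d,d-j,h}$ for $|h|\geq 3$. This polynomial has $d-j$ roots in the small annulus of radii around $|h|^{-1/(d-j)}$ and $j$ roots in the large annulus of radii around $|h|^{1/j}$. Because the two clusters are separated by a factor that tends to infinity with $|h|$, the ordering $|\alpha_0|\geq |\alpha_1|\geq \cdots \geq |\alpha_{d-1}|$ places the $j$ large roots in the positions $0,1,\ldots,j-1$ and the $d-j$ small roots in the positions $j,j+1,\ldots,d-1$. Combined with the explicit radii from the corollary, this yields
\begin{equation*}
\log|\alpha_i| = \tfrac{1}{j}\log|h| + O(1) \quad (0 \leq i \leq j-1), \qquad \log|\alpha_i| = -\tfrac{1}{d-j}\log|h| + O(1) \quad (j \leq i \leq d-1),
\end{equation*}
where the implied constants depend only on $d$ (in particular, not on $h$). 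Since $j \leq k < d$, this covers all of the indices $0,1,\ldots,k$ appearing in \eqref{eq:EkdDefiningIneq}.

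Now I would take logarithms of \eqref{eq:EkdDefiningIneq}, plug in the asymptotics above, divide through by $\log|h|$ (which is positive for large $|h|$), and let $|h| \to \infty$ through those integers for which $f_{d,d-j,h}$ is irreducible. The $O(1)/\log|h|$ terms vanish in the limit, leaving
\begin{equation*}
\frac{1}{j}\bigl(1 + c_1 + \cdots + c_{j-1}\bigr) - \frac{1}{d-j}\bigl(c_j + \cdots + c_k\bigr) \geq 0,
\end{equation*}
which after multiplication by $d-j$ is exactly the stated inequality.

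There is no serious obstacle here; the one conceptual point to flag is the choice of exponent $d-j$ (rather than $j$) in the trinomial, which is what arranges for the ``large versus small'' split among the $|\alpha_i|$ to fall between indices $j-1$ and $j$. The rest is routine: checking that the error terms in Corollary~\ref{cor:fdkhAnnuli} are absorbed into $O(1)$ summands in the logarithm, and invoking Hilbert Irreducibility so that the root $\alpha$ genuinely has degree $d$.
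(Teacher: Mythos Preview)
Your proposal is correct and follows essentially the same approach as the paper: both use the trinomial $f_{d,d-j,h}$, invoke Hilbert Irreducibility for infinitely many $h$, apply Corollary~\ref{cor:fdkhAnnuli} to separate the roots into a large cluster of size $j$ and a small cluster of size $d-j$, and then let $|h|\to\infty$ to force the coefficient of $\log|h|$ to be nonnegative. The only difference is cosmetic—the paper writes out the explicit constants $3/2$ and $2$ in the inequality chain, whereas you package them as $O(1)$ and pass to the limit—so the arguments are the same in substance.
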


\begin{proof}
    Since $(c_1,\dots,c_k) \in E_{k,d}$, we know that for any monic, irreducible $f(x) \in \Z[x]$ with degree $d$ and roots $\alpha_0,\dots,\alpha_{d-1}$ as in Notation~\ref{notation}, \[|\alpha_0||\alpha_1|^{c_1}\cdots|\alpha_k|^{c_k} \geq 1.\]

    Fix any $j$ with $1 \leq j \leq k$, and consider $h \in \Z$ for which $f_{d,d-j,h}(x)$ is irreducible over $\Q$.  The roots $\alpha_0^{(h)},\dots,\alpha_{d-1}^{(h)}$ of $f_{d,d-j,h}(x)$ satisfy \[|\alpha_0^{(h)}|,\dots,|\alpha_{j-1}^{(h)}| < \left(\frac{3|h|}{2}\right)^{1/j}\] and \[|\alpha_j^{(h)}|,\dots,|\alpha_{d-1}^{(h)}| < \left(\frac{2}{|h|}\right)^{1/(d-j)}\] by Corollary \ref{cor:fdkhAnnuli}.  Therefore, for infinitely many $h \in \Z$, we have
    \begin{align*}
        0   &\leq \log|\alpha_0^{(h)}| + c_1\log|\alpha_1^{(h)}| + \cdots + c_k \log|          \alpha_k^{(h)}|\\
            &\leq \left(\frac{1}{j} + \frac{c_1}{j} + \cdots + \frac{c_{j-1}}{j}\right)\log\left(\frac{3|h|}{2}\right) + \left(\frac{c_j}{d-j} + \cdots + \frac{c_k}{d-j}\right)\log\left(\frac{2}{|h|}\right)\\
            &= \left(\left(\frac{1}{j} + \frac{c_1}{j} + \cdots + \frac{c_{j-1}}{j}\right)\log(3/2) + \left(\frac{c_j}{d-j} + \cdots + \frac{c_k}{d-j}\right)\log2  \right)+\\
            &\qquad\qquad +\left(\frac{1}{j} + \frac{c_1}{j} + \cdots + \frac{c_{j-1}}{j} - \frac{c_j}{d-j} - \cdots - \frac{c_k}{d-j}\right)\log|h|.
    \end{align*}

    The only way that this last quantity is nonnegative for infinitely many values of $h \in \Z$ is if \[\frac{1}{j} + \frac{c_1}{j} + \cdots + \frac{c_{j-1}}{j} - \frac{c_j}{d-j} - \cdots - \frac{c_k}{d-j} \geq 0.\] 
\end{proof}

Now we will use the linear inequalities we just derived to inspire the definition of the following set, which we will show to be a polytope that is equal to $E_{k,d}$.
\begin{definition}\label{def:Pkd}
    Let $P_{k,d} \subset \R^k$ denote the set of $(x_1,\dots,x_k)$ satisfying  the following system of linear inequalities:
    \begin{align*}
        \frac{d-j}{j}+\frac{d-j}{j}\sum_{i=1}^{j-1}x_i-\sum_{i=j}^k x_i   &\geq 0 &\text{for each } 1 \leq j \leq k\\
        x_j &\geq 0 &\text{for each } 1 \leq j \leq k.
    \end{align*}
\end{definition}

\begin{proposition}\label{prop:Pkdisinthestatement}
    $P_{k,d}$ is a $k$-dimensional polytope in $\R^k$ with exactly $2^k$ vertices. Moreover, these vertices are in bijection with the subsets of $\{1,\ldots,k\}$ as follows. If $J\subseteq \{1,\ldots,k\}$ and $J=\{j_1,\ldots,j_n\}$ with $j_1<j_2<\cdots<j_n$, then each vertex $(v_1,\ldots,v_k)$ of $P_{k,d}$ is given by
    \[v_j = \begin{cases} 0 & j \notin J\\ \frac{j_{\ell+1} - j_\ell}{j_1} & j=j_\ell \text{ for some } \ell < n\\ \frac{d-j_n}{j_1} & j = j_n \end{cases}.\]
\end{proposition}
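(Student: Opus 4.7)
The plan is to recognize that $P_{k,d}$ is an instance of the abstract polytope treated in Proposition~\ref{prop:verticesAbstract}. I would set $a_j := (d-j)/j$ and $b_j := 1$ for $1 \leq j \leq k$, under which the $2k$ inequalities of Definition~\ref{def:Pkd} match \eqref{eq:2k with ai bi} exactly. Positivity of each $a_j$ follows from $j \leq k < d$, and the remaining hypothesis $a_i b_j - a_j b_i > 0$ for $i < j$ is the one-line computation
\[
\frac{d-i}{i} - \frac{d-j}{j} = \frac{d(j-i)}{ij} > 0.
\]
Proposition~\ref{prop:verticesAbstract} then delivers at once that $P_{k,d}$ is a $k$-dimensional polytope with exactly $2^k$ vertices, in bijection with subsets of $\{1,\ldots,k\}$.

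To match the explicit description in the statement, I fix $J = \{j_1 < \cdots < j_n\}$ and let $v = (v_1,\ldots,v_k)$ be the candidate point given by the formula. By Proposition~\ref{prop:verticesAbstract}, the vertex corresponding to $J$ is the unique point satisfying $L_{j_\ell}(v) = 0$ for $\ell = 1,\ldots,n$ together with $v_j = 0$ for $j \notin J$, where $L_i(x) = \frac{d-i}{i}(1 + x_1 + \cdots + x_{i-1}) - (x_i + \cdots + x_k)$. Since uniqueness is already in hand, my only task is to check that the candidate $v$ solves this system. The condition $v_j = 0$ for $j \notin J$ holds by definition, so everything reduces to verifying $L_{j_\ell}(v) = 0$.

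For each $\ell$, the telescoping structure built into the formula causes
\[
1 + \sum_{i < j_\ell} v_i = 1 + \sum_{m=1}^{\ell-1} \frac{j_{m+1} - j_m}{j_1} = \frac{j_\ell}{j_1},
\]
while the sum $\sum_{i \geq j_\ell} v_i$ similarly collapses to $(d - j_\ell)/j_1$ (using the boundary term $(d-j_n)/j_1$ at the end, for both the case $\ell < n$ and $\ell = n$). The two pieces of $L_{j_\ell}(v)$ then cancel cleanly, giving $\frac{d-j_\ell}{j_\ell} \cdot \frac{j_\ell}{j_1} - \frac{d-j_\ell}{j_1} = 0$.

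This is the only real computation in the argument, and the main obstacle is simply index bookkeeping --- in particular, confirming that the normalization by $j_1$ in the vertex formula is precisely what is needed to make the telescoping align with the coefficient $(d-j_\ell)/j_\ell$ appearing in $L_{j_\ell}$. Once the reduction to Proposition~\ref{prop:verticesAbstract} is set up, the rest is a mechanical verification with no conceptual difficulty.
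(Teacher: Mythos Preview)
Your proposal is correct and follows essentially the same route as the paper: reduce to Proposition~\ref{prop:verticesAbstract} (with $a_j=(d-j)/j$, $b_j=1$) and then verify by a telescoping computation that the candidate point satisfies the linear system indexed by $J$. You are slightly more explicit than the paper in checking the hypotheses $a_ib_j-a_jb_i>0$ and in writing out both partial sums, but the argument is the same.
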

\begin{proof}
    From Proposition~\ref{prop:verticesAbstract}, it remains to show that $(v_1,\ldots,v_k)$
    is a solution to the linear system:
    \begin{align}
        \frac{d-j}{j}+\frac{d-j}{j}\sum_{i=1}^{j-1}x_i-\sum_{i=j}^k x_i&=0 & j \in J\label{eq:nonzeroCoordinate}\\
        x_j &= 0 & j \notin J\label{eq:zeroCoordinate}.
    \end{align}
    We only need to verify that $(v_1,\ldots,v_k)$ satisfy the equations in \eqref{eq:nonzeroCoordinate}. Let $j\in J$, then 
    \[\left(-\frac{d-j}{j}\sum_{i=1}^{j-1}v_i\right) + \sum_{i=j}^k v_i = -\frac{d-j}{j} \cdot \frac{j-j_1}{j_1} + \frac{d-j}{j_1} = \frac{d-j}{j}.\]
\end{proof}

\begin{proof}[Proof of Theorem \ref{thm:EkdShape}.]
    Proposition~\ref{prop:EkdIneqs} implies that $E_{k,d}\subseteq P_{k,d}$ and Proposition~\ref{prop:Pkdisinthestatement} implies that $P_{k,d}$ is the polytope in the statement of Theorem~\ref{thm:EkdShape}. It remains to show $P_{k,d}\subseteq E_{k,d}$. 
    By convexity, we only need to prove that the vertices of $P_{k,d}$ are in $E_{k,d}$.

    Let $J$ be any subset of $\{1,\ldots,k\}$ and write $J=\{j_1,\ldots,j_n\}$ with $j_1<\cdots<j_n$. Let $(v_1,\ldots,v_k)$ be the vertex corresponding to $J$ as in the statement of Proposition~\ref{prop:Pkdisinthestatement}. If $J = \emptyset$, then $(v_1,\dots,v_k) = (0,\dots,0)$, yielding \[|\alpha_0||\alpha_1|^{v_1}\cdots|\alpha_k|^{v_k} = |\alpha_0| \geq 1.\] Otherwise, we have
    \begin{align}\label{eq:vertexInequality}
        \begin{split}
        \left(|\alpha_0||\alpha_1|^{v_1}\cdots|\alpha_k|^{v_k}\right)^{j_1}&= |\alpha_0|^{j_1}|\alpha_{j_1}|^{j_2-j_1}|\alpha_{j_2}|^{j_3-j_2}\cdots|\alpha_{j_n}|^{d-j_n}\\
        &\geq |\alpha_0||\alpha_1|\cdots|\alpha_{d-1}| \geq 1.
        \end{split}
    \end{align}
\end{proof}

\section{Proof of Theorem~\ref{thm:quantitative}} \label{sec:thm2}
\begin{lemma}\label{lem:<d/3}
    Let $\alpha$ be an algebraic unit of degree $d \geq 2$ such that $\alpha$ is not a root of unity. 
    Then for $0\leq i<d/3$, we have $\vert\alpha_i\vert>\vert\alpha_{d-1}\vert$.
\end{lemma}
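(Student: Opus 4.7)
The plan is to argue by contradiction. Assume that $\vert\alpha_i\vert = \vert\alpha_{d-1}\vert =: r$ for some $0 \leq i < d/3$. By the ordering in Notation~\ref{notation}, this forces $\vert\alpha_i\vert = \vert\alpha_{i+1}\vert = \cdots = \vert\alpha_{d-1}\vert = r$, so exactly $m := d - i > 2d/3$ conjugates of $\alpha$ lie on the circle $\vert z\vert = r$, while only $i < d/3$ lie strictly outside. First I would dispose of the degenerate cases: since $\alpha$ is a unit, $\prod_j \vert\alpha_j\vert = 1$, so $r \leq 1$. If $r = 1$ then all $\vert\alpha_j\vert = 1$ and Kronecker's theorem forces $\alpha$ to be a root of unity, contradicting the hypothesis; thus $r < 1$, and \emph{a fortiori} $i \geq 1$ (the case $i = 0$ would force $r^d = 1$).

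Next, I would exploit the fact that complex conjugation preserves the set $S := \{\alpha_i,\ldots,\alpha_{d-1}\}$ (since $\vert\bar\alpha_j\vert = \vert\alpha_j\vert = r$ for $\alpha_j \in S$). This gives $g(x) := \prod_{\alpha_j \in S}(x-\alpha_j) \in \R[x]$, and because $\bar\alpha_j = r^2/\alpha_j$ for $\alpha_j \in S$, the polynomial $g$ satisfies the reciprocal-type functional equation $x^m g(r^2/x) = \epsilon\, r^m g(x)$ for some $\epsilon \in \{\pm 1\}$. Matching constant terms, the product $A := \prod_{\alpha_j \in S}\alpha_j$ is a \emph{real} algebraic integer unit of absolute value $r^m < 1$; in particular $A \notin \Q$, since the only rational integer of absolute value less than $1$ is zero. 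I would also verify that the minimal polynomial $f$ of $\alpha$ must be non-reciprocal: if $f$ were reciprocal, the reciprocals $\{1/\alpha_j : \alpha_j \in S\}$ would be $m$ distinct roots of $f$ of absolute value $1/r > 1 > r$, contradicting the fact that only $i < d/3 < m$ roots of $f$ lie outside the circle $\vert z\vert = r$.

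The main step---and the main obstacle---is to combine these structural facts, together with the factorization $f(x) = h(x)g(x)$ for $h(x) := \prod_{j<i}(x-\alpha_j) \in \R[x]$ and the integrality $f \in \Z[x]$, to produce a contradiction. I expect the argument to leverage the imbalance $m > 2i$ (equivalently $i < d/3$) to extract an algebraic number derived from the $\alpha_j$'s---such as a ratio $\alpha_j/\alpha_k$ with $j,k \in S$, or a suitable Galois conjugate of $A$---all of whose Galois conjugates lie on the unit circle; Kronecker's theorem would then force this derived number to be a root of unity, and a short Galois-theoretic chase should propagate the conclusion back to $\alpha$ itself, contradicting the hypothesis. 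The hardest part is pinpointing precisely how the strict inequality $i < d/3$ must enter: the example in the remark following Theorem~\ref{thm:quantitative}---namely $\alpha = \xi^{1/k}$ for $\xi$ a fundamental unit of a real cubic field with one real and two complex embeddings, where $d = 3k$ and $\vert\alpha_k\vert = \vert\alpha_{d-1}\vert$---shows that the threshold is sharp, so the proof must use $i < d/3$ (and not just $i \leq d/3$) in an essential combinatorial or Galois-equivariant way.
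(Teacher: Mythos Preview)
Your proposal is not a proof: it is an honest outline that stops precisely at the point where the work begins. You establish several correct structural facts (the set $S$ is closed under complex conjugation, $g\in\R[x]$ satisfies a reciprocal-type relation, $A=\prod_{\alpha_j\in S}\alpha_j=\pm r^m$ is a real non-rational algebraic integer, $f$ is non-reciprocal), but none of these facts, singly or in combination, yields a contradiction, and you acknowledge as much when you write ``I expect the argument to leverage\ldots'' and ``the hardest part is pinpointing precisely how the strict inequality $i<d/3$ must enter.'' In particular, the suggestion to locate some derived algebraic number (a ratio $\alpha_j/\alpha_k$, or a conjugate of $A$) all of whose conjugates lie on the unit circle is not substantiated: you give no mechanism for producing such a number, and indeed the sharp example at $d=3k$ shows that any such mechanism must break down there, so vagueness here is fatal.

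The paper's argument is shorter and more direct, and it uses the threshold $i<d/3$ in a transparent pigeonhole count. Let $i^*$ be minimal with $\vert\alpha_{i^*}\vert=\vert\alpha_{d-1}\vert=:a$, and choose a Galois automorphism $\sigma$ with $\sigma(\alpha_{i^*})=\alpha_0$. The equalities $\alpha_j\overline{\alpha_j}=a^2$ for $j\geq i^*$ are \emph{algebraic} equalities among conjugates, so applying $\sigma$ gives $\sigma(\alpha_j)\sigma(\overline{\alpha_j})=\alpha_0\,\sigma(\overline{\alpha_{i^*}})$ for all such $j$, and the right-hand side has absolute value strictly greater than $a^2$. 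Now count: among the $d-i^*-1$ indices $j\in\{i^*+1,\ldots,d-1\}$, at most $i^*-1$ can have $\sigma(\alpha_j)\in\{\alpha_1,\ldots,\alpha_{i^*-1}\}$, and among the survivors at most $i^*$ can have $\sigma(\overline{\alpha_j})\in\{\alpha_0,\ldots,\alpha_{i^*-1}\}$. This leaves at least $d-3i^*>0$ indices $j$ for which both $\sigma(\alpha_j)$ and $\sigma(\overline{\alpha_j})$ lie in $\{\alpha_{i^*},\ldots,\alpha_{d-1}\}$, hence $\vert\sigma(\alpha_j)\sigma(\overline{\alpha_j})\vert=a^2$, contradicting the previous sentence. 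The inequality $i^*<d/3$ enters exactly as the condition $d-3i^*>0$ needed for this last set to be nonempty. Your setup never isolates an equation of the form $\alpha_j\overline{\alpha_j}=\alpha_{i^*}\overline{\alpha_{i^*}}$ to which a Galois automorphism can be applied, and that is the missing idea.
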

\begin{proof}
    Let $i^*$ be the smallest index such that $\vert\alpha_{i^*}\vert=\vert\alpha_{d-1}\vert$. Assume that $i^*<d/3$ and we will arrive at a contradiction.  Let $a=\vert\alpha_{i^*}\vert=\ldots=\vert\alpha_{d-1}\vert$ which is
    the minimum value among the moduli of the $\alpha_i$'s. Since $\alpha$ is a unit that is not a root of unity, we have $\vert\alpha_0\vert>1>a$.

    Let $\sigma$ be a Galois automorphism such that 
    $\sigma(\alpha_{i^*})=\alpha_0$. Applying $\sigma$ to
    $$\alpha_{i^*}\overline{\alpha_{i^*}}=\ldots=\alpha_{d-1}\overline{\alpha_{d-1}},$$
    we get
    $$\alpha_0\sigma(\overline{\alpha_{i^*}})=\ldots=\sigma(\alpha_{d-1})\sigma(\overline{\alpha_{d-1}}).$$
    Since $\vert\alpha_0\vert>a$ and $\vert\sigma(\overline{\alpha_{i^*}})\vert\geq a$, we have
    $\vert\alpha_0\sigma(\overline{\alpha_{i^*}})\vert>a^2$.

    The preimage $\sigma^{-1}(\{\alpha_1,\ldots,\alpha_{i^*-1}\})$ has $i^*-1$ many elements. Therefore 
    the set
    $$S:=\{i^*+1\leq j\leq d-1:\ \sigma(\alpha_j)\notin\{\alpha_1,\ldots,\alpha_{i^*-1}\}\}$$
    has at least $d-2i^*$ many elements. From the definition of $S$ and the fact that $\sigma(\alpha_{i^*})=\alpha_0$, we have:
    $$\sigma(\alpha_j)\notin\{\alpha_0,\alpha_1,\ldots,\alpha_{i^*-1}\}\ \text{for every $j\in S$.}$$
    Similarly, since the preimage $\sigma^{-1}(\{\alpha_0,\alpha_1,\ldots,\alpha_{i^*-1}\})$
    has $i^*$ many elements, the set
    $$S':=\{j\in S:\ \sigma(\overline{\alpha_j})\notin\{\alpha_0,\alpha_1,\ldots,\alpha_{i^*-1}\}\}$$
    has at least $d-3i^*$ many elements.

    Since $d-3i^*>0$, the set $S'$ is non-empty. Let $j\in S'$, then we have
    $\sigma(\alpha_j),\sigma(\overline{\alpha_j})\notin\{\alpha_0,\ldots,\alpha_{i^*-1}\}$. Therefore
    $\vert\sigma(\alpha_j)\sigma(\overline{\alpha_j})\vert=a^2$ contradicting the properties that
    $\alpha_0\sigma(\overline{\alpha_{i^*}})=\sigma(\alpha_j)\sigma(\overline{\alpha_j})$
    and $\vert\alpha_0\sigma(\overline{\alpha_{i^*}})\vert>a^2$.
\end{proof}

\begin{lemma}\label{lemma:adapt (1) and (2)}
    Let $\alpha$ be an algebraic unit of degree $d\geq 3$ that is not a root of unity and let $H$ denote the height of the minimal polynomial of $\alpha$. Suppose 
    there exists $0< m< d-1$ such that
    $$\vert\alpha_m\vert\leq 1\ \text{and}\ 0<\vert\alpha_m\vert-\vert\alpha_{d-1}\vert<\vert\alpha_0\vert^{-1/(d-1)}.$$
    Then, 
    \begin{itemize}
        \item [(a)] $\vert\alpha_0\vert\gg_d H^{1/m}$
        \item [(b)] $\vert\alpha_m\vert <2\vert\alpha_0\vert^{-1/(d-1)}\ll_d H^{-1/(m(d-1))}$.
        \item [(c)] If there exist $i, j \in \{m,m+1,\ldots,d-1\}$ such that
        $\vert\alpha_i\vert\neq\vert\alpha_j\vert$ and if $\alpha_{i}, \alpha_j \in \mathbb{R}$, then 
        $$\vert\alpha_m\vert-\vert\alpha_{d-1}\vert\gg_d H^{-(d-1)}.$$
        \item [(d)] If there exists $i \in \{m,m+1,\ldots,d-1\}$ such that $\alpha_i \in \mathbb{R}$, then 
        $$\vert\alpha_m\vert-\vert\alpha_{d-1}\vert\gg_d H^{-2(d-1)(d-2)+1/(m(d-1))}.$$
    \end{itemize}
\end{lemma}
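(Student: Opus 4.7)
The proof naturally splits into four parts, with (a) and (b) being elementary while (c) and (d) invoke Theorem~\ref{thm:BDFPS}. For (a), I will upper-bound the height by a power of $|\alpha_0|$: if the minimal polynomial is $x^d+\sum_{k=1}^d a_{d-k}x^{d-k}$, then $|a_{d-k}|=|e_k(\alpha_0,\ldots,\alpha_{d-1})|$ is at most $\binom{d}{k}$ times the largest product of $k$ distinct conjugates, and since $|\alpha_j|\leq|\alpha_m|\leq 1$ for all $j\geq m$, any such product has absolute value at most $|\alpha_0|^{\min(k,m)}$. Because $\alpha$ is a unit that is not a root of unity, Kronecker's theorem forces $|\alpha_0|>1$, so $H\leq 2^d|\alpha_0|^m$, giving $|\alpha_0|\gg_d H^{1/m}$. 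For (b), the unit property of $\alpha$ yields $|\alpha_0|\cdots|\alpha_{d-1}|=1$; combined with the monotonicity $|\alpha_1|\geq\cdots\geq|\alpha_{d-1}|$, this gives $|\alpha_{d-1}|^{d-1}\leq|\alpha_0|^{-1}$, so $|\alpha_{d-1}|\leq|\alpha_0|^{-1/(d-1)}$. The hypothesis $|\alpha_m|-|\alpha_{d-1}|<|\alpha_0|^{-1/(d-1)}$ then yields $|\alpha_m|<2|\alpha_0|^{-1/(d-1)}$, and substituting (a) produces the stated quantitative form.

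For (c), the non-increasing arrangement of the moduli together with $\{i,j\}\subseteq\{m,\ldots,d-1\}$ give $|\alpha_m|-|\alpha_{d-1}|\geq\bigl||\alpha_i|-|\alpha_j|\bigr|$, and Theorem~\ref{thm:BDFPS}(1) applied to the two real roots of unequal modulus finishes the job. For (d), the strict inequality $|\alpha_m|>|\alpha_{d-1}|$ lets me pair the given real root $\alpha_i$ with whichever of $\alpha_m,\alpha_{d-1}$ has modulus different from $|\alpha_i|$; call the partner $\alpha_j$. If $\alpha_j$ happens to be real then Theorem~\ref{thm:BDFPS}(1) gives an even better bound and we are done, so the substantive case is when $\alpha_j$ is non-real. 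Theorem~\ref{thm:BDFPS}(2) immediately gives $\bigl||\alpha_i|-|\alpha_j|\bigr|\gg_d H^{-2(d-1)(d-2)}$, but to gain the additional factor $H^{1/(m(d-1))}$ in the exponent I will revisit its proof, which proceeds by bounding $\bigl||\alpha_i|^2-|\alpha_j|^2\bigr|$ from below through an auxiliary polynomial whose roots are pairwise products of conjugates of $\alpha$, and then dividing by $|\alpha_i|+|\alpha_j|$. In the generic setting that denominator is controlled only by a Mahler-measure bound, but in our situation part (b) gives $|\alpha_i|+|\alpha_j|\leq 2|\alpha_m|\ll_d H^{-1/(m(d-1))}$, and substituting this sharper upper bound produces the desired exponent.

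The routine work is in (a)--(c); the main obstacle lies in (d), where I cannot simply quote Theorem~\ref{thm:BDFPS}(2) as a black box but must isolate its intermediate lower bound on $\bigl||\alpha_i|^2-|\alpha_j|^2\bigr|$ and redo the final division using the much smaller upper bound on $|\alpha_i|+|\alpha_j|$ supplied by (b).
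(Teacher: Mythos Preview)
Your proposal is correct and follows essentially the same approach as the paper: parts (a)--(c) match directly (your direct derivation of $|\alpha_{d-1}|\leq|\alpha_0|^{-1/(d-1)}$ in (b) is a minor variant of the paper's contrapositive), and in (d) the paper likewise pairs the real $\alpha_i$ with some $\alpha_j$ of different modulus in $\{m,\ldots,d-1\}$, invokes the intermediate bound $\bigl||\alpha_i|^2-|\alpha_j|^2\bigr|\gg_d H^{-2(d-1)(d-2)}$ from the \emph{proof} of Theorem~\ref{thm:BDFPS}(2), and divides by $|\alpha_i|+|\alpha_j|\leq 2|\alpha_m|$ using (b) to gain the extra $H^{1/(m(d-1))}$.
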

\begin{proof}
    Let $a_0+a_1x+\cdots+a_{d-1}x^{d-1}+x^d$ be the minimal polynomial of $\alpha$. Since $\vert\alpha_i\vert\leq 1$ for $i\geq m$, we have $\vert\alpha_{i_1}\vert\cdots\vert\alpha_{i_j}\vert\leq \vert\alpha_0\vert^m$ for any distinct $i_1,\ldots,i_j\in\{0,\ldots,d-1\}$. Hence $\vert a_i\vert\ll \vert\alpha_0\vert^m$ for $0\leq i\leq d-1$ and this proves part (a).

    For part (b), suppose $\vert\alpha_m\vert\geq 2\vert\alpha_{0}\vert^{-1/(d-1)}$. Since $\vert\alpha_m\vert-\vert\alpha_{d-1}\vert<\vert\alpha_0\vert^{-1/(d-1)}$, we have $\vert\alpha_{d-1}\vert>\vert\alpha_0\vert^{-1/(d-1)}$. But, this implies $1=\vert\alpha_0\vert\cdots\vert\alpha_{d-1}\vert\geq \vert\alpha_0\alpha_{d-1}^{d-1}\vert>1$, which is a contradiction.

    Part (c) follows from $\vert\alpha_m\vert-\vert\alpha_{d-1}\vert\geq \vert\alpha_i\vert-\vert\alpha_j\vert$ and part (1) of Theorem~\ref{thm:BDFPS}.

    For part (d), note that since $\vert\alpha_m\vert\neq \vert\alpha_{d-1}\vert$, there always exists $j\in\{m,\ldots,d-1\}$ such that $\vert\alpha_j\vert\neq\vert\alpha_i\vert$. Among such $j$'s, if there exists a real $\alpha_j$, we can apply part (c) to get a stronger lower bound. Otherwise, by the \emph{proof} of Theorem~\ref{thm:BDFPS}(2) in 
    \cite[p.~808]{BDFPS2022}, we have
    $$\vert\vert\alpha_i\vert^2-\vert\alpha_j\vert^2\vert\gg_d H^{-2(d-1)(d-2)}.$$
    Combining this with part (b), we get
    $$\vert\alpha_m\vert-\vert\alpha_{d-1}\vert\geq \left\vert \vert\alpha_i\vert-\vert\alpha_j\vert \right\vert=\frac{\left\vert\vert\alpha_i\vert^2-\vert\alpha_j\vert^2\right\vert}{\vert\alpha_i\vert+\vert\alpha_j\vert}\gg_d H^{-2(d-1)(d-2)+1/(m(d-1))}.$$
\end{proof}

\begin{proof}[Proof of Theorem~\ref{thm:quantitative}]
    Throughout the proof, the positive constants $C_1,C_2,\ldots, C_5$ depend only on $d$. 
    By convexity of $E_{k,d}$, it suffices to consider the case when $(c_1,\ldots,c_k)$ is a vertex. The first observation is that
    there exists $C_1>1$ such that $\vert \alpha_0\vert\geq C_1$, hence we may assume $(c_1,\ldots,c_k)\neq (0,\ldots,0)$. By Theorem \ref{thm:EkdShape}, we have a nonempty $I=\{i_1,\ldots,i_n\} \subseteq\{1,\ldots,k\}$ with $i_1 < \cdots < i_n$ such that
    \begin{align*}
        c_j = \begin{cases}
            0 & j \notin I\\
            \frac{i_{\ell+1} - i_\ell}{i_1} & j = i_\ell \text{ for some } \ell < n\\
            \frac{d - i_n}{i_1} & j = i_n
        \end{cases}.
    \end{align*}
    Then,
    \begin{align*}
    \left(\vert \alpha_0\vert\vert\alpha_1\vert^{c_1}\cdots\vert\alpha_k\vert^{c_k}\right)^{i_1}
    &=\vert\alpha_0\vert^{i_1}\vert\alpha_{i_1}\vert^{i_2-i_1}\cdots\vert\alpha_{i_n}\vert^{d-i_n}\\
    &\geq \vert\alpha_{0}\vert\cdots\vert\alpha_{i_n-1}\vert\vert\alpha_{i_n}\vert^2\vert\alpha_{i_n+1}\vert\cdots\vert\alpha_{d-2}\vert
    \end{align*}
    and it suffices to prove the desired lower bound for the last quantity. We may assume $\vert\alpha_0\cdots\alpha_{d-1}\vert=1$; otherwise, the above quantity is at least $\vert\alpha_0\cdots\alpha_{d-1}\vert\geq 2$. We may also assume $\vert\alpha_{i_n}\vert<1$; otherwise, the above quantity is at least $\vert\alpha_0\vert\geq C_1$.  We have
    \begin{align}\label{eq:|alphain|-|alphad-1|}
    \begin{split}
    \vert\alpha_{0}\vert\cdots\vert\alpha_{i_n-1}\vert\vert\alpha_{i_n}\vert^2\vert\alpha_{i_n+1}\vert\cdots\vert\alpha_{d-2}\vert-1&=\vert\alpha_0\cdots\alpha_{d-2}\vert(\vert\alpha_{i_n}\vert-\vert\alpha_{d-1}\vert)\\
    &>\vert\alpha_{i_n}\vert-\vert\alpha_{d-1}\vert
    \end{split}
    \end{align}
    We may assume that $\displaystyle\vert\alpha_{i_n}\vert-\vert\alpha_{d-1}\vert<\vert\alpha_0\vert^{-1/(d-1)}$. Otherwise, we have a lower bound of the form
    $$1+\frac{1}{\vert\alpha_0\vert^{1/(d-1)}}\geq 1+\frac{1}{(H+1)^{1/(d-1)}}$$
    which is much better than the one in the statement of Theorem~\ref{thm:quantitative}; here the inequality
    $1/\vert\alpha_0\vert\geq 1/(H+1)$ follows from applying the Cauchy bound \cite{BDFPS2022} for the reciprocal polynomial of the minimal polynomial of $\alpha_0$. By Lemma~\ref{lemma:adapt (1) and (2)}(b), we have
    \begin{equation}\label{eq:<<H^{-1/md-1}}
    \vert\alpha_{i_n}\vert\ll_d H^{-1/(i_n(d-1))}\ll_d H^{-1/(k(d-1))}.
    \end{equation}
    
    Let $k_1$ be the smallest index such that $\vert\alpha_{k_1}\vert=\vert\alpha_{i_n}\vert$
    and let $k_2$ be the largest index such that $\vert\alpha_{k_2}\vert>\vert\alpha_{d-1}\vert$. By Lemma~\ref{lem:<d/3} and the assumption that $3k < d$, we have
    $$k_1\leq i_n\leq k\leq \lceil d/3\rceil -1\leq k_2.$$

    By Lemma~\ref{lemma:adapt (1) and (2)}(c)(d) (with $m=k_1$), we may assume that $\alpha_i$ is not real for $k_1\leq i\leq d-1$. Note that $k_2-k_1+1$ is even since the elements of $\{\alpha_{k_1},\alpha_{k_1+1},\ldots,\alpha_{k_2}\}$ are pairs of complex-conjugate numbers. By relabelling the $\alpha_{k}$'s if necessary, we may assume that the $(\alpha_{k_1},\alpha_{k_1+1}),\ldots,(\alpha_{k_2-1},\alpha_{k_2})$ are pairs of complex-conjugate numbers.
    The number of such pairs is
    $$M:=\frac{k_2-k_1+1}{2}\geq \left\lceil\frac{\lceil d/3\rceil-k}{2}\right\rceil.$$
    To finish the proof, our goal is to prove that 
    $$\vert\alpha_{k_1}\vert-\vert\alpha_{d-1}\vert\gg_d H^{-(d-1)(d-2)(d-3)/(2M)}.$$
    We do this by adapting the method in \cite[p.808]{BDFPS2022}. As in \cite{BDFPS2022}, we let 
    $$\mathcal{S}=\{(i,j,s,t):\ 0\leq i,j,s,t\leq d-1,\ i<j,\ s<t,\ \{i,j\}\cap\{s,t\}=\emptyset\}$$ and consider the polynomial
    $$P(x):=\prod_{(i,j,s,t)\in\mathcal{S}} \left(x^{1/2}-(\alpha_i\alpha_j-\alpha_s\alpha_t)\right).$$
    Let $\delta\geq 0$ be the largest integer such that $x^{\delta}$ divides $P(x)$ and let $Q(x)=P(x)/x^{\delta}$. Then the  $(\vert\alpha_{i}\vert^2-\vert\alpha_{d-1}\vert^2)^2$ for $i=k_1,k_1+2,\ldots,k_2-1$
    are $M$ many positive real roots of $Q(x)$, counted with multiplicity. 
    Write
    $$Q(x)=q_0+q_1x+\cdots+q_Dx^D.$$
    Then, we have the crude estimate $M\leq D\leq d^4$. Note that $q_0\neq 0$. By \cite[pp.~807--808]{BDFPS2022}, we have
    $$\max\vert q_i\vert=H(Q)=H(P)\leq C_2 H^{(d-1)(d-2)(d-3)}.$$

    \textbf{Case 1:} Suppose $\vert q_j\vert\leq H^{j(d-1)(d-2)(d-3)/M}$ for $0\leq j\leq M-1$.
    Let $C_3<1/2$ be a small positive constant that will be specified shortly. We prove that every root $r$ of $Q(x)$ satisfies:
    $$\vert r\vert\geq C_3H^{-(d-1)(d-2)(d-3)/M}.$$
    Assume otherwise that $\vert r\vert<C_3H^{-(d-1)(d-2)(d-3)/M}\leq 1/2$. Then, we have
    \begin{align*}
            1\leq \vert q_0\vert&\leq \vert q_1r\vert+\cdots+\vert q_{M-1}r^{M-1}\vert+\vert q_Mr^M\vert+\cdots+\vert q_Dr^D\vert\\
            &\leq C_3+C_3^2+\cdots+C_3^{M-1}+C_2H^{(d-1)(d-2)(d-3)}\frac{\vert r\vert^M}{1-\vert r\vert}\\
            &\leq MC_3+2C_2C_3^M.
    \end{align*}
    Therefore, if we choose a sufficiently small $C_3$ so that $MC_3+2C_2C_3^M<1$, then we arrive at a contradiction. In this case---recall that $(\vert\alpha_{k_1}\vert^2-\vert\alpha_{d-1}\vert^2)^2$ is a root of $Q(x)$---we have
    $$(\vert\alpha_{k_1}\vert^2-\vert\alpha_{d-1}\vert^2)^2\geq C_3H^{-(d-1)(d-2)(d-3)/M}.$$

    \textbf{Case 2:} Suppose there exists $0\leq \ell\leq M-1$ such that
    $\vert q_{\ell}\vert>H^{\ell(d-1)(d-2)(d-3)/M}$ with $\ell$ maximal. Consider the derivative $Q^{(\ell)}(x)=:\ell!q_{\ell}+q_1'x+\cdots+q_{D-\ell}'x^{D-\ell}$ of order $\ell$. Then, there exists $C_4 > 0$ such that
    $\vert q_j'\vert\leq C_4H^{(j+\ell)(d-1)(d-2)(d-3)/M}$ for $1\leq j\leq M-1-\ell$ and  
    $\vert q_j'\vert\leq C_4H^{(d-1)(d-2)(d-3)}$ for $M-\ell\leq j\leq D-\ell$. We prove that every root $r'$ of $Q^{(\ell)}(x)$ satisfies
    $$\vert r'\vert\geq C_5H^{-(d-1)(d-2)(d-3)/M},$$
    where $C_5<1/2$ is a  small positive constant that will be specified shortly. Assume otherwise that $\vert r'\vert< C_5H^{-(d-1)(d-2)(d-3)/M}\leq 1/2$. Then, we have
    \begin{align*}
            \ell!H^{\ell(d-1)(d-2)(d-3)/M}\leq \vert \ell!q_{\ell}\vert\leq& \vert q_1'r'\vert+\cdots+
            \vert q_{M-1-\ell}'r'^{M-1-\ell}\vert\\
            &+\vert q_{M-\ell}'r'^{M-\ell}\vert+\cdots+\vert q_{D-\ell}r'^{D-\ell}\vert\\
            \leq& C_4(C_5+C_5^2+\cdots+C_5^{M-1-\ell})H^{\ell(d-1)(d-2)(d-3)/M}\\
            &+C_4H^{(d-1)(d-2)(d-3)}\frac{\vert r'\vert^{M-\ell}}{1-\vert r'\vert}\\
            \leq&(MC_4C_5+2C_4C_5^{M-\ell})H^{\ell(d-1)(d-2)(d-3)/M}.
    \end{align*}
    Therefore, if we choose a sufficiently small $C_5$ so that $MC_4C_5+2C_4C_5^{M-\ell}<\ell!$, then we arrive at a contradiction. Now, by repeated applications of the Mean Value Theorem, we get that $Q^{(\ell)}(x)$ has a root on the interval $$\left[(\vert\alpha_{k_2}\vert^2-\vert\alpha_{d-1}\vert^2)^2,(\vert\alpha_{k_1}\vert^2-\vert\alpha_{d-1}\vert^2)^2\right].$$ 
    
    Hence, in both cases, we have
    $$(\vert\alpha_{k_1}\vert^2-\vert\alpha_{d-1}\vert^2)^2 \gg_d H^{-(d-1)(d-2)(d-3)/M}.$$
    Combining this with \eqref{eq:<<H^{-1/md-1}}, we get
    $$\vert\alpha_{k_1}\vert-\vert\alpha_{d-1}\vert=\frac{\vert\alpha_{k_1}\vert^2-\vert\alpha_{d-1}\vert^2}{\vert\alpha_{k_1}\vert+\vert\alpha_{d-1}\vert} \gg_d H^{-(d-1)(d-2)(d-3)/(2M)+1/(k(d-1))}.$$
    Since $\displaystyle M\geq \left\lceil\frac{\lceil d/3\rceil-k}{2}\right\rceil$, we get the desired result.
    \end{proof}

\bibliographystyle{amsplain}

\end{document}